\documentclass[final,hidelinks,onefignum,onetabnum]{siamart220329}

\usepackage{enumitem}
\usepackage{subfig}
\usepackage{amsmath}
\usepackage{bm}
\usepackage{tikz}
\usepackage{appendix}
\usepackage{amsfonts}
\usepackage{multirow}
\usepackage{amssymb}
\usepackage{algorithm,algorithmicx,algpseudocode}
\usepackage{extarrows} 

\algdef{SE}[DOWHILE]{Do}{doWhile}{\algorithmicdo}[1]{\algorithmicwhile\ #1}%

\usepackage{amsmath}

\makeatletter 
\g@addto@macro\normalsize{%
	\setlength\abovedisplayskip{6pt plus 2pt minus 2pt}%
	\setlength\belowdisplayskip{6pt plus 2pt minus 2pt}%
	\setlength\abovedisplayshortskip{3pt plus 1pt minus 2pt}%
	\setlength\belowdisplayshortskip{3pt plus 1pt minus 2pt}}
\makeatother

\usepackage{caption}
\usepackage[marginal,hang]{footmisc}

\usepackage{booktabs}

\newsiamremark{remark}{Remark}
\newsiamthm{Def}{Definition}
\newsiamthm{property}{Property}
\newsiamthm{Prop}{Proposition}
\newsiamthm{claim}{Claim}
\newsiamthm{exmp}{Example}
\newsiamthm{assumption}{Assumption}

\ifpdf
\hypersetup{
	pdftitle={},
	pdfauthor={}
}
\fi

\begin{document}

\headers{}{}

\title{{
	Beyond Free-Stream Preservation: Transport Polynomial Exactness for Moving-Mesh Methods under Arbitrary Mesh Motion}
	\thanks{
			This work was partially supported by National Key R\&D Program of China (Grant No.~2022YFA1004500) and National Natural Science Foundation of China (Grant No.~12471390)}
	}
	
\author{
	Chaoyi Cai\thanks{School of Mathematical Sciences, Xiamen University, Xiamen, Fujian 361005, China (\email{caichaoyi@stu.xmu.edu.cn}).}
	\and Qiqin Cheng\thanks{School of Mathematical Sciences, Xiamen University, Xiamen, Fujian 361005, China (\email{qqcheng@stu.xmu.edu.cn}).}
	\and Di Wu\thanks{School of Mathematical Sciences, Xiamen University, Xiamen, Fujian 361005, China (\email{wudiwork@stu.xmu.edu.cn}).}
	\and Jianxian Qiu\thanks{Corresponding author. School of Mathematical Sciences and Fujian Provincial Key Laboratory of Mathematical Modeling and High-Performance Scientific Computing, Xiamen University, Xiamen, Fujian 361005, China (\email{jxqiu@xmu.edu.cn}).}
	}
	
	\maketitle
	
	\begingroup
	\renewcommand\thefootnote{\fnsymbol{footnote}}%
	\setcounter{footnote}{0}%
	\setlength{\footnotemargin}{1.1em}
	\endgroup
\begin{abstract}
High-order moving-mesh methods can effectively reduce numerical diffusion, but their formal accuracy typically relies on the regularity of the mesh velocity.	This dependency creates a fundamental conflict in the numerical solution of hyperbolic conservation laws, where solution-driven adaptation may induce nonsmooth mesh motion, thereby degrading convergence order. To overcome this limitation, we introduce \emph{transport polynomial exactness} (TPE($k$)), a mesh-motion-independent criterion that generalizes classical free-stream preservation (TPE(0)) to the exact advection of degree-$k$ polynomials. We show that the classical geometric conservation law (GCL) is insufficient to ensure TPE($k$) for $k \ge 1$ due to mismatches in higher-order geometric moments. To resolve this, we propose the method of \emph{evolved geometric moments} (EGMs), which evolves geometric information by solving auxiliary transport equations discretized compatibly with the physical variables. We rigorously prove that second-degree EGMs evolved via the third-order strong stability preserving Runge--Kutta (SSPRK3) method coincide with the exact geometric moments. This exactness arises from a \emph{superconvergence} mechanism wherein SSPRK3 reduces to Simpson's rule for EGM evolution. Leveraging this result, we construct a third-order conservative finite-volume rezoning moving-mesh scheme. The scheme satisfies the TPE(2) property for \emph{arbitrary mesh motion} and \emph{any pseudo-time step size}, thereby naturally accommodating spatiotemporally discontinuous mesh velocity. Crucially, this \emph{breaks the efficiency bottleneck} in the conventional advection-based remapping step and reduces the required pseudo-time levels from $\mathcal{O}(h^{-1})$ to $\mathcal{O}(1)$ under bounded but discontinuous mesh velocity. Numerical experiments verify exact quadratic transport and stable third-order convergence under extreme mesh deformation, demonstrating substantial efficiency gains. 

\end{abstract}

\begin{MSCcodes}
65M08, 65M50, 65M12, 35L65
\end{MSCcodes}

\begin{keywords}
transport polynomial exactness, evolved geometric moments, geometric conservation law, moving-mesh method, remapping, hyperbolic conservation laws, WENO reconstruction
\end{keywords}

\section{Introduction}
Hyperbolic conservation laws are canonical partial differential equations that model local conservation and wave propagation, arising in a wide range of scientific and engineering applications including compressible gas dynamics, plasma physics, multiphase flows, and traffic modeling.
A defining feature of these systems is that, even for smooth initial data, solutions may develop shocks, contact discontinuities, and intricate multiscale structures in finite time. 
Designing conservative numerical methods that are simultaneously high-order accurate in smooth regions and non-oscillatory near discontinuities remains a central challenge in scientific computing \cite{cockburn2012discontinuous,toro2013riemann,shu2020essentially}.

The system of $d$-dimensional hyperbolic conservation laws is given by
\begin{equation}\label{equ:hyperbolic}
	\frac{\partial \mathbf{U}}{\partial t}
	+ \nabla \cdot \mathbf{F}(\mathbf{U}) = \mathbf 0,
\end{equation}
where $\mathbf{U}\in\mathbb{R}^m$ denotes the vector of conserved variables and
$\mathbf{F}(\mathbf{U})\in\mathbb{R}^{m\times d}$ is the flux tensor.
On fixed meshes, high-order conservative schemes such as discontinuous Galerkin (DG) methods \cite{cockburn2012discontinuous} and essentially non-oscillatory (ENO) / weighted ENO (WENO) schemes \cite{shu2020essentially} have reached a high level of maturity.
However, for flows featuring small-scale structures or sharp localized gradients, uniform refinement may be prohibitively expensive, and numerical diffusion may still obscure important features.
This motivates adaptive strategies, including $h$-adaptivity \cite{BERGER198964}, $p$-adaptivity \cite{Mitchell2014}, and $r$-adaptivity based on moving-mesh methods \cite{tang2003adaptive,Budd_Huang_Russell_2009}.

\subsection{Moving-mesh methods: Lagrangian/ALE versus rezoning moving meshes}
For convection-dominated problems with coherent structures, moving-mesh methods can significantly reduce numerical dissipation, improve resolution of discontinuities and interfaces, and enhance the effective accuracy at a fixed number of degrees of freedom \cite{HIRT1974227,tang2003adaptive,Donea2004Chapter1A,Budd_Huang_Russell_2009,anderson2018high}.

Moving-mesh methods can be broadly categorized into two classes. The first class consists of \emph{Lagrangian/arbitrary Lagrangian--Eulerian (ALE)} methods, in which the mesh convects with the flow (or with a prescribed velocity) and the mesh motion is coupled with the numerical flux discretization within each time step. Such methods are widely used for interface tracking and large-deformation multi-material simulations \cite{HIRT1974227,Donea2004Chapter1A}.
The second class is the \emph{rezoning moving-mesh} (RMM) framework. This approach adopts a decoupled strategy within each physical time step: it first advances the solution on the current mesh (physical evolution), then constructs an adapted mesh (rezoning), and finally transfers the numerical solution to the adapted mesh (remapping)~\cite{li2001moving,li2002moving,tang2003adaptive,Budd_Huang_Russell_2009}.
A key advantage of RMM is the decoupling between physics and mesh adaptation:
mature high-order fixed-grid discretizations can be integrated with minimal modification, while the rezoning step accommodates a wide variety of strategies, such as equidistribution principles \cite{huang1994}, variational approaches \cite{BRACKBILL1982342}, spring analogy methods \cite{1990Batina}, and even Lagrangian motion \cite{HIRT1974227}, as reviewed in \cite{Budd_Huang_Russell_2009}. This diversity raises a fundamental question:
given heterogeneous strategies and potential severe mesh deformation, \emph{how can one ensure a moving-mesh method truly delivers its nominal high-order accuracy?}
In the RMM framework, preserving this accuracy critically depends on the remapping step, which serves as the bridge for transferring the high-order solution between evolving meshes.

\subsection{Advection-based remapping and an efficiency bottleneck under discontinuous mesh velocity}\label{subsec:o1/h}
Specifically, remapping can be formulated as a pseudo-time transport process:
at a fixed physical time (e.g., $t=t^{n}$), one introduces a pseudo-time variable $\tau$ and expresses the conservative transfer as
\begin{equation}\label{eq:pseudo_transport}
	\frac{\partial \mathbf U}{\partial \tau}=\mathbf 0,
\end{equation}
which can be discretized in flux form on a moving control volume via the Reynolds transport theorem.
This enables the seamless integration of existing high-order reconstructions and time discretizations \cite{li2001moving,li2002moving,ortega2011geometrically,anderson2015monotonicity,anderson2018high,lipnikov2019high,lipnikov2020conservative,zhangmin2020,gu2023high}, an approach known as \emph{advection-based remapping}.

Despite its appeal, advection-based remapping is, in essence, an ALE discretization of \eqref{eq:pseudo_transport}. Consequently, it inherits the ALE requirement of mesh-velocity regularity to preserve high-order accuracy~\cite{klingenberg2017arbitrary,li2019high,gu2023high}. 
As noted in~\cite{anderson2015monotonicity}, the convergence rate of advection-based remapping ``is observed \emph{only} for `smooth' problems'' where the mesh velocity is also smooth.
However, for hyperbolic conservation laws, solution-driven adaptation often induces spatiotemporally discontinuous mesh velocity, manifesting as steep gradients in its spatial and temporal profiles.
To mitigate the resulting accuracy degradation, two primary strategies are available. The first involves regularizing the mesh motion via smoothing techniques, such as filtering the monitor function or applying geometric smoothing (e.g., Laplacian or harmonic) to the mesh coordinates \cite{tang2003adaptive,anderson2015monotonicity,anderson2018high}. 
While this alleviates the issue of discontinuous mesh velocity, the resulting mesh potentially constrains the spatial resolution of the adapted mesh. The second strategy entails a substantial increase in the number of pseudo-time levels to minimize the incremental displacement. A typical setup prescribes a fixed final pseudo-time (e.g., $\tau^{\mathrm{final}}=1$) and enforces a CFL-like condition $\Delta\tau=\mathcal{O}(h)$, where $h$ is a characteristic mesh size \cite{lipnikov2019high,lipnikov2020conservative}. Although advanced strategies exist to adapt $\tau^{\mathrm{final}}$ based on mesh-velocity smoothness~\cite{gu2023high}, dealing with bounded yet discontinuous mesh velocity still requires $N_\ell=\mathcal{O}(h^{-1})$ pseudo-time levels per physical step to recover nominal high-order accuracy. Without such treatments, a clear loss of convergence order is observed, as reported in~\cite{zhangmin2020}. Since explicit physical time stepping generally uses $\Delta t=\mathcal{O}(h)$, coupling each physical step with $N_\ell=\mathcal{O}(h^{-1})$ remapping levels leads to an overall computational cost equivalent to using an effective time step size of $\mathcal{O}(h^2)$, which can be prohibitively expensive in practice.

Notably, this particular bottleneck is intrinsic to advection-based remapping.
In contrast, intersection-based remapping \cite{dukowicz1984conservative,dukowicz1987accurate} and swept-/flux-based remapping \cite{dukowicz2000incremental} derive the transfer operator directly from geometric overlap/swept regions and therefore do not impose the same mesh-velocity smoothness requirements.
The trade-off is that they rely on complex and robust geometric operations, which become costly and intricate for general polygonal or curvilinear cells.
In this paper, we focus on advection-based remapping and address its efficiency bottleneck by introducing a high-order exactness criterion and a corresponding remapping method. 

\subsection{From free-stream preservation to a stronger high-order criterion: transport polynomial exactness}
A well-known basic requirement in ALE methods is the \emph{geometric conservation law} (GCL), which ensures that the free stream (constant state) is not spuriously perturbed by mesh motion \cite{ThomasLombard1979,farhat2001discrete,mavriplis2006construction,persson2009discontinuous,boscheri2014direct}. 
However, the classical GCL typically serves to enforce compatibility only for the \emph{cell volume} $\int 1\,\mathrm{d}x\mathrm{d}y$ (taking $d=2$ for ease of notation). 
High-order schemes also depend on \emph{higher-order geometric moments} $\int x^s y^r\,\mathrm{d}x\mathrm{d}y$ to maintain accuracy.
In the presence of strongly nonsmooth mesh velocity, satisfying only the GCL may be insufficient to guarantee high-order accuracy or even stable convergence, a phenomenon that will be systematically demonstrated in our numerical experiments. 

To assess high-order accuracy under arbitrary mesh motion and eliminate reliance on mesh-velocity smoothness assumptions (e.g., Lipschitz continuity), we introduce a mesh-motion-independent criterion:
\emph{transport polynomial exactness of order $k$} (TPE($k$)).
Consider the linear advection system 
\begin{equation}\label{eq:advection}
	\frac{\partial \mathbf U}{\partial t}+\mathbf a\cdot\nabla \mathbf U=\mathbf 0,\qquad \mathbf a\in\mathbb{R}^d,
\end{equation}
whose exact solution is given by $\mathbf U(t,\mathbf x)=\mathbf U_0(\mathbf x-\mathbf a t)$ \cite{toro2013riemann}, reflecting the translation invariance of linear transport.
We say that a moving-mesh method for the hyperbolic system \eqref{equ:hyperbolic} is TPE$(k)$ if, whenever \eqref{equ:hyperbolic} reduces to \eqref{eq:advection} and the initial condition $\mathbf U_0$ is a vector-valued polynomial of degree $k$, the fully discrete numerical solution coincides with the exact solution. Under this criterion, TPE$(0)$ reduces to free-stream preservation and is precisely the property enforced by the classical GCL.
Although nonlinear ingredients like limiters may destroy polynomial exactness, requiring TPE($k$) is essential for linear high-order discretizations. Here, $k$ denotes the degree of the underlying polynomial space.

\subsection{Main contributions}
Addressing the challenge of preserving high-order accuracy under discontinuous mesh velocity, we make the following key contributions:

\begin{itemize}
	\item \textbf{Formalization of transport polynomial exactness (TPE).}
	We introduce the TPE$(k)$ property as a verifiable criterion generalizing free-stream preservation (TPE$(0)$) to the exact advection of degree-$k$ polynomials. This provides a sharp diagnostic for assessing whether a moving-mesh method truly delivers its nominal high-order accuracy.
	
	\item \textbf{Generalization of classical GCL via evolved geometric moments (EGMs).} 
	We formulate auxiliary transport equations for geometric moments ($\int x^s y^r\,\mathrm{d}x\,\mathrm{d}y$) and employ a discretization consistent with the physical conserved variables. This formulation extends the classical GCL to higher orders, systematically eliminating geometric mismatch.
	
	\item \textbf{Exact geometric consistency without nonconservative post-processing.}
	We establish the nontrivial result that, under the third-order strong stability preserving Runge--Kutta (SSPRK3) method, the second-degree EGMs coincide with the exact geometric moments, thereby avoiding the need for nonconservative post-processing such as geometric refreshing or source-term corrections \cite{VISBAL2002155,cai2025geometricperturbationrobustcutcellschemetwomaterial}. Remarkably, this alignment emerges as a consequence of superconvergence.
	
	\item \textbf{A TPE(2) remapping operator for arbitrary mesh motion.}
	By coupling a 2-exact hybrid WENO reconstruction with EGMs, we construct an advection-based remapping operator that provably satisfies the TPE(2) property under arbitrary mesh motion. Crucially, this property holds independently of the pseudo-time step size $\Delta\tau$.
	
	\item \textbf{$\mathcal{O}(1)$ pseudo-time levels under discontinuous mesh velocity.}
	Leveraging the $\Delta\tau$-independence of the TPE(2) property, the proposed remapping maintains third-order accuracy using only $\mathcal{O}(1)$ pseudo-time levels, even in the presence of spatiotemporally discontinuous mesh velocity. This yields substantial efficiency gains over conventional $\mathcal{O}(h^{-1})$-level strategies.
	
	\item \textbf{Mapping- and Jacobian-free TPE(2) RMM scheme.} 
	We employ the $2$-exact hybrid WENO reconstruction to realize a TPE(2) fixed-grid physical evolution operator. Coupling this with the TPE(2) remapping operator yields a complete TPE(2) RMM scheme. Notably, the use of EGMs avoids repeated reference-to-physical mappings and Jacobian evaluations, thereby streamlining the implementation.
\end{itemize}

\vspace{0.3em}
\noindent\textbf{Organization of the paper.}
Section~\ref{sec:framework} formulates the TPE$(k)$ property and outlines the structure of the proposed TPE(2) RMM scheme.
Section~\ref{sec:details} presents the algorithmic construction of the physical evolution and remapping operators.
Section~\ref{sec:analysis} provides a rigorous analysis of the TPE(2) property, conservation, and geometric consistency.
Section~\ref{sec:numerical_tests} validates the scheme through carefully designed numerical experiments, highlighting its accuracy, efficiency, and robustness.
Finally, Section~\ref{sec:conclusion} offers concluding remarks.
\section{Transport polynomial exactness and the TPE(2) RMM scheme}\label{sec:framework}
In this section, we formalize the general concept of TPE($k$) and outline the proposed two-dimensional (2D) finite-volume TPE(2) RMM scheme. The detailed algorithmic construction is presented in Section \ref{sec:details}, while the rigorous theoretical analysis is deferred to Section \ref{sec:analysis}.

\subsection{TPE($k$): transport polynomial exactness of order $k$}\label{sec:tpe_definition}
In practical computations, mesh motion can be governed by a diverse array of strategies, including equidistribution principles, variational approaches, spring analogy methods, and Lagrangian motion \cite{HIRT1974227,BRACKBILL1982342,1990Batina,huang1994,Budd_Huang_Russell_2009}. Given this algorithmic heterogeneity and the potential for severe mesh distortions, it is necessary to establish a rigorous, mesh-motion-independent criterion that guarantees genuinely high-order accuracy. 
To generalize the classical free-stream preservation (exactness for constant states) to higher-order accuracy, we consider the linear advection system
\begin{equation}\label{equ:linearAdv}
	\frac{\partial \mathbf U}{\partial t}
	+ \mathbf{a} \cdot \nabla \mathbf U = \mathbf 0
\end{equation}
with initial condition $\mathbf U(t=0,\mathbf{x}) = \mathbf U_0(\mathbf{x})\in \mathbb{R}^m$, where $\mathbf{a} \in \mathbb{R}^d$ represents a constant advection velocity and $\mathbf{x} \in \mathbb{R}^d$ denotes the spatial coordinate. As a prototypical hyperbolic conservation law, equation~\eqref{equ:linearAdv} admits the exact solution
\[
\mathbf U(t,\mathbf{x}) = \mathbf U_0(\mathbf{x} - \mathbf{a} t).
\]
This identity characterizes the fundamental transport mechanism of hyperbolic systems. Ideally, a moving-mesh method should preserve this structural invariance regardless of mesh motion, a requirement that motivates the following definition.

\begin{definition}[Transport polynomial exactness]\label{def:TPE}
	Let $k\in\mathbb N_0$ and define
	\[
	\mathbb{P}_k^m
	= \Bigl\{
	\mathbf p:\mathbb R^d\to\mathbb R^m
	\;\Big|\;
	\mathbf p(\mathbf x)
	= \sum_{|\alpha|\le k} \mathbf c_\alpha \mathbf x^\alpha,
	\ \mathbf c_\alpha\in\mathbb R^m
	\Bigr\},
	\]
	where $\alpha$ is a multi-index and 
	$\mathbf x^\alpha = x_1^{\alpha_1}\cdots x_d^{\alpha_d}$.
	A scheme for system~\eqref{equ:hyperbolic} is said to be \emph{TPE($k$)} if,
	whenever \eqref{equ:hyperbolic} reduces to the linear advection system
	\eqref{equ:linearAdv} with initial condition $\mathbf U_0 \in \mathbb{P}_k^m$,
	the fully discrete solution coincides with the projection of the exact solution
	$
	\mathbf U(t,\mathbf{x}) = \mathbf U_0(\mathbf{x} - \mathbf{a} t)
	$
	onto the discrete space for all discrete times $t$.
\end{definition}

Although nonlinear ingredients (e.g., limiters or nonlinear reconstructions) may compromise this exactness, the linear baseline of a high-order scheme should achieve TPE($k$), consistent with its degree-$k$ polynomial approximation space.

\subsection{Notation}
Achieving TPE($k$) for $k \ge 1$ is nontrivial, as it demands exact compatibility between geometric evolution and physical discretization. In this work, we construct a finite-volume RMM scheme for the hyperbolic conservation laws \eqref{equ:hyperbolic} that rigorously satisfies the TPE(2) property. To describe this scheme, we first introduce the necessary notation. Focusing on the 2D case ($d=2$), we let $\mathbf{x}=(x,y)^\top$ denote the spatial coordinates.

\begin{description}
	\item[Mesh partition $\mathcal{T}_h^n$.] 
	At each time level $t^n$, the computational domain is partitioned into a mesh $\mathcal{T}_h^n = \{ I_{i,j}^n \}_{i,j=1}^{N_x, N_y}$ comprising $N_x \times N_y$ quadrilateral cells. Although the initial mesh $\mathcal{T}_h^0$ is Cartesian (uniform rectangular), the cells $I_{i,j}^n$ evolve into general quadrilaterals during the simulation, governed by the specific rezoning strategy.
	
	\item[Cell averages.] 
	We denote the cell-averaged conserved variables over the control volume $I_{i,j}^n$ by $\overline{\mathbf{U}}_{i,j}^n$.
	
	\item[Physical evolution operator $\mathcal{E}_h$.] 
	This operator advances the solution of \eqref{equ:hyperbolic} on the \emph{fixed} mesh $\mathcal{T}_h^n$ over the physical time step $\Delta t = t^{n+1}-t^n$. It yields the intermediate cell averages $\overline{\mathbf{U}}_{i,j}^{\,n+1,0}$ associated with the pre-rezoned mesh $\mathcal{T}_h^n$.
	
	\item[Rezoning operator $\mathcal{Z}_h$.] 
	Based on the intermediate solution $\overline{\mathbf{U}}_{i,j}^{\,n+1,0}$, the rezoning operator $\mathcal{Z}_h$ generates the updated mesh $\mathcal{T}_h^{n+1}$ at time $t^{n+1}$.
	
	\item[Remapping operator $\mathcal{R}_h$.] 
	The remapping operator $\mathcal{R}_h$ transfers the solution from the current mesh $\mathcal{T}_h^n$ to the updated mesh $\mathcal{T}_h^{n+1}$. This is done by solving the pseudo-time equation $\partial_\tau \mathbf{U} = \mathbf 0$ over the interval $[0, \tau^{\text{final}}]$ using $N_\ell$ pseudo-time levels.
	
	\item[Physical-time vs.\ pseudo-time mesh velocities.] 
	Let $\delta\mathbf{x} := \mathbf{x}^{n+1}-\mathbf{x}^n$ denote the mesh displacement over a physical time step $\Delta t$. We define the \emph{physical-time mesh velocity} as $\mathbf{w}^{\text{phys}} := \delta\mathbf{x}/\Delta t$ and the \emph{pseudo-time mesh velocity} as $\mathbf{w} := \delta\mathbf{x}/\tau^{\text{final}}$. 
	Since the numerical solution is independent of $\tau^{\text{final}}$ (Remark~\ref{remark:tautarget_arbitrary}), we set $\tau^{\text{final}} = \Delta t$ without loss of generality, yielding $\mathbf{w} = \mathbf{w}^{\text{phys}}$. Unless otherwise specified, the term ``mesh velocity'' refers to this unified quantity. We allow $\mathbf{w}$ to be ``discontinuous'' in space and time, referring to steep gradients that violate the Lipschitz regularity typically assumed in error analysis \cite{klingenberg2017arbitrary}.
\end{description}

\subsection{The TPE(2) RMM scheme: overview}
The algorithmic workflow of the TPE(2) RMM scheme for the hyperbolic conservation laws \eqref{equ:hyperbolic} is outlined below.
\begin{center}
	\vspace{0.2cm}
	\fcolorbox{black}{blue!10}{%
		\parbox{.95\linewidth}{%
			\small
			\begin{center}
				\textbf{Overview of the TPE(2) RMM scheme}
			\end{center}
			
			\begin{description}
				\item[\textbf{Step 1 (Initialization).}]
				Initialize the mesh $\mathcal{T}_h^{0}$ and the cell averages
				$\overline{\mathbf{U}}_{i,j}^{0}$ at $t=0$.
				
				\item[\textbf{Step 2 (Physical evolution).}]
				Advance the solution on the fixed mesh $\mathcal{T}_h^{n}$ to $t^{n+1}=t^n+\Delta t$:
				$
				\overline{\mathbf{U}}_h^{\,n+1,0}
				= \mathcal{E}_h\bigl(\overline{\mathbf{U}}_{h}^{n};\,\mathcal{T}_h^{n},\Delta t\bigr).
				$
				
				\item[\textbf{Step 3 (Rezoning).}]
				Generate an adapted mesh at $t^{n+1}$:
				$
				\mathcal{T}_h^{n+1}
				= \mathcal{Z}_h\bigl(\mathcal{T}_h^{n};\,\overline{\mathbf{U}}_h^{\,n+1,0}\bigr).
				$
				
				\item[\textbf{Step 4 (Remapping).}]
				Transfer the solution to the new mesh by solving a
				pseudo-time problem $\partial_\tau \mathbf{U} = \mathbf 0$ on $[0,\tau^{\text{final}}]$:
				$
				\overline{\mathbf{U}}_h^{n+1}
				= \overline{\mathbf{U}}_h^{n+1,N_\ell}
				= \mathcal{R}_h\bigl(
				\overline{\mathbf{U}}_h^{\,n+1,0};
				\,\mathcal{T}_h^n \!\to\! \mathcal{T}_h^{n+1}
				\bigr).
				$
				
				\item[\textbf{Step 5 (Mesh iteration [Optional]).}] If iterative mesh relaxation is required, update $\mathcal{T}_h^n \gets \mathcal{T}_h^{n+1}$ and $\overline{\mathbf{U}}_h^{n+1,0} \gets \overline{\mathbf{U}}_h^{n+1}$, then repeat Steps 3--4.
				
				\item[\textbf{Step 6 (Iteration).}]
				Set $n \gets n+1$ and repeat Steps 2--5 until the final time $t = T$.
			\end{description}
		}%
	}
\vspace{0.3cm}
\end{center}

\section{Algorithmic construction}\label{sec:details}
This section details the algorithmic implementation of the proposed TPE(2) RMM scheme. 
Since $\mathcal{R}_h$ is the key to achieving the TPE(2) property, we prioritize the description of it in Section~\ref{subsec:remap}, followed by the details of $\mathcal{E}_h$ in Section~\ref{sec:phys_evol}. Finally, we describe the rezoning operator $\mathcal{Z}_h$ in Section~\ref{subsec:rezoning} and the 2-exact hybrid WENO reconstruction in Section~\ref{subsec:WENOZQ3}, the latter providing the spatial basis for the TPE(2) property.

\subsection{Remapping operator $\mathcal{R}_h$}\label{subsec:remap}

We perform the remapping by solving the pseudo-time equation $\partial_\tau \mathbf{U} = \mathbf 0$. The major challenge lies in discretizing this equation within the ALE framework to ensure exact preservation of quadratic polynomials.

In the context of the finite volume method, solving $\partial_\tau \mathbf{U} = \mathbf 0$ is equivalent to solving the integral conservation laws over a time-dependent control volume $\Omega(\tau)$:
\begin{equation}\label{equ:ut0_int_form}
	\frac{\mathrm{d}}{\mathrm{d}\tau} \int_{\Omega(\tau)} \mathbf{U}\,\mathrm{d}x\,\mathrm{d}y
	= \int_{\partial \Omega(\tau)}(\mathbf{U}\otimes \mathbf{w})\cdot\mathbf{n}\,\mathrm{d}S,
\end{equation}
where $\mathbf{n} = (n_x,n_y)^\top$ is the outward unit normal to the boundary $\partial \Omega(\tau)$, and $\mathbf{w}$ denotes the velocity of the moving boundary. The cell-integrated conserved quantities 
$
\mathbf{V}_{i,j}(\tau) := \int_{I_{i,j}(\tau)} \mathbf{U}(x,y,\tau)\,\mathrm{d}x\mathrm{d}y
$
are adopted as the evolving variables. Unlike fixed-grid schemes where $\mathbf{V}_{i,j}$ relates trivially to cell averages via constant cell volumes, ALE computations evolve $\mathbf{V}_{i,j}$ directly. This necessitates the reconstruction of $\mathbf{U}$ from these integrated values $\{\mathbf{V}_{i,j}\}$.

\subsubsection{Evolved geometric moments (EGMs)}\label{sec:egms}
Consider a target cell $I_{i^*,j^*}$. Standard high-order reconstruction techniques (e.g., ENO/WENO) seek a vector-valued polynomial
\[
\mathbf{p}^{(I_{i^*,j^*})}(x,y) = \sum_{s,r} \mathbf{c}_{s,r}^{(I_{i^*,j^*})}\,x^{s}y^{r}, \quad \mathbf{c}_{s,r}^{(I_{i^*,j^*})}\in\mathbb{R}^m,
\]
constrained to match the integrated conserved variables over every cell in the stencil $S(I_{i^*,j^*})$:
\begin{equation}\label{eq:WENO_reconstruction_target_cell}
	\sum_{s,r}\mathbf{c}_{s,r}^{(I_{i^*,j^*})} \int_{I_{i,j}} x^{s}y^{r}\,\mathrm{d}x\,\mathrm{d}y
	\;=\;
	\mathbf{V}_{i,j},\quad\forall I_{i,j}\in S(I_{i^*,j^*}).
\end{equation}
Equation~\eqref{eq:WENO_reconstruction_target_cell} highlights the dependence on the \emph{high-order geometric moments}, defined as
\begin{equation}\label{def:geo moment M}
	\mathcal{M}_{s,r,i,j}(\tau) = \int_{I_{i,j}(\tau)} x^{s}y^{r}\,\mathrm{d}x\,\mathrm{d}y.
\end{equation}
In general, a $(k+1)$-th order finite volume scheme necessitates $\mathcal{M}_{s,r,i,j}$ for $s+r \leq k$. Thus, for our third-order TPE(2) RMM scheme, it suffices to focus on $s+r \leq 2$.

\begin{figure}[!thbp]
	\centering
	\captionsetup[subfigure]{labelformat=empty}
	\includegraphics[width=1.0\textwidth]{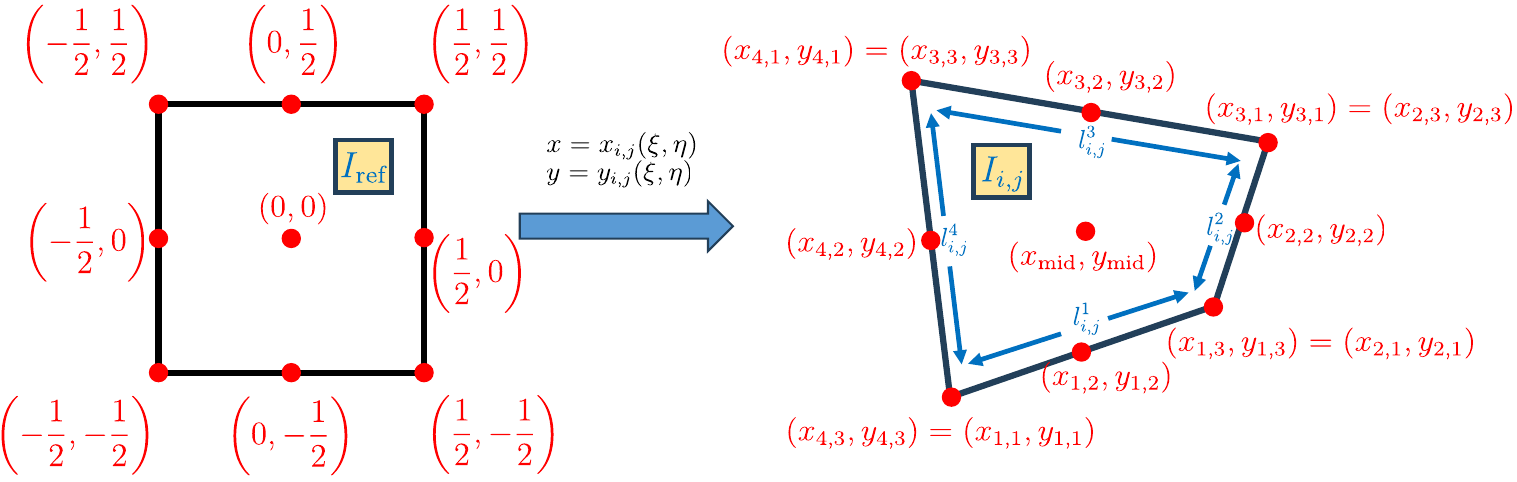}
	\captionsetup{font=small}
	\caption{Mapping from the reference cell $I_{\mathrm{ref}}$ to the physical cell $I_{i,j}$.}
	\label{fig:Jac&Gauss}
\end{figure}
In conventional remapping methods, $\mathcal{M}_{s,r,i,j}$ are typically evaluated via direct integration over the target cell geometry~\cite{gu2023high}.
For the quadrilateral cell $I_{i,j}$ (Figure~\ref{fig:Jac&Gauss}), we employ a bilinear transformation mapping the reference square $I_{\mathrm{ref}}=[-\frac12,\frac12]^2$ to the physical domain $I_{i,j}$:
\begin{equation}\label{equ:shape}
	\begin{aligned}
		x &=x_{i,j}(\xi,\eta)=a_0 + a_1\xi + a_2\eta + a_3\xi\eta,\\
		y &=y_{i,j}(\xi,\eta)=b_0 + b_1\xi + b_2\eta + b_3\xi\eta,
	\end{aligned}
\end{equation}
The geometric moments are then formulated as the integral involving the Jacobian determinant $|J_{i,j}|$:
\begin{equation}\label{equ:shape_int}
	\mathcal{M}_{s,r,i,j}
	= \int_{I_{\mathrm{ref}}} x_{i,j}(\xi,\eta)^s y_{i,j}(\xi,\eta)^r\,|J_{i,j}(\xi,\eta)|\,\mathrm{d}\xi\,\mathrm{d}\eta.
\end{equation}
To evaluate~\eqref{equ:shape_int}, we utilize a $3\!\times\!3$ tensor-product Gauss--Lobatto quadrature rule:
\begin{equation}\label{equ:shape_discre_int}
	\mathcal{M}_{s,r,i,j}
	= \sum_{\ell=1}^3 \sum_{k=1}^3
	\omega_\ell \omega_k\,
	x_{i,j}(\xi_\ell,\eta_k)^{s}\,
	y_{i,j}(\xi_\ell,\eta_k)^{r}\,
	\bigl|J_{i,j}(\xi_\ell,\eta_k)\bigr|.
\end{equation}
Here, the nodes are $\xi_{2\pm1} = \eta_{2\pm1} = \pm \frac{1}{2}$ and $\xi_2 = \eta_2 = 0$, with corresponding weights $\omega_{2\pm1} = \frac{1}{6}$ and $\omega_2 = \frac{2}{3}$. This choice ensures exact integration for degrees $s+r \le 2$.

While the geometric moments $\mathcal{M}_{s,r,i,j}$ can thus be computed exactly, the quantities $\mathbf{V}_{i,j}$ are evolved numerically via~\eqref{equ:ut0_int_form} and inherently contain discretization errors. Coupling ``inexact'' $\mathbf{V}_{i,j}$ with ``exact'' $\mathcal{M}_{s,r,i,j}$ during reconstruction introduces a mismatch that degrades accuracy and can preclude convergence. The classical GCL~\cite{zhangmin2020,lipnikov2020conservative} resolves this only for the zeroth moment (volume), leaving higher-order moments ($s+r>0$) unconstrained. As demonstrated in Section~\ref{sec:numerical_tests}, in regimes with discontinuous mesh velocity, satisfying the classical GCL proves insufficient to guarantee high-order accuracy.

To ensure compatibility with the quantities $\mathbf{V}_{i,j}$, we derive the temporal evolution of the geometric moments. Applying the Reynolds transport theorem to \eqref{def:geo moment M} yields the auxiliary equations
\begin{equation}\label{equ:high order gcl}
	\frac{\mathrm{d}\mathcal{M}_{s,r,i,j}}{\mathrm{d}\tau}
	= \frac{\mathrm{d}}{\mathrm{d}\tau}\int_{I_{i,j}(\tau)} x^{s} y^{r}\,\mathrm{d}x\,\mathrm{d}y
	= \int_{\partial I_{i,j}(\tau)} x^{s} y^{r}\,(\mathbf{w}\cdot\mathbf{n})\,\mathrm{d}S.
\end{equation} 
The boundary-flux structure in~\eqref{equ:high order gcl} mirrors that of the integral conservation laws~\eqref{equ:ut0_int_form}. Discretizing~\eqref{equ:high order gcl} using a numerical scheme compatible with~\eqref{equ:ut0_int_form} yields the \emph{evolved geometric moments} $\widetilde{\mathcal{M}}_{s,r,i,j}$, or simply EGMs. This formulation subsumes the classical GCL within a broader hierarchy: the zeroth moment $\widetilde{\mathcal{M}}_{0,0,i,j}$ exactly recovers the classical GCL, while the higher-order EGMs extend geometric compatibility to higher degrees. As will be demonstrated in Section~\ref{subsubsec:TPE(2)ofRemap}, the incorporation of EGMs endows the remapping operator with the TPE(2) property, which is pivotal for rendering the entire RMM scheme TPE(2).

\subsubsection{Computation of the mesh velocity}
\label{sec:compute_w}
\begin{figure}[!tbhp]
	\centering
	\captionsetup[subfigure]{labelformat=empty}
	\includegraphics[width=0.96\textwidth]{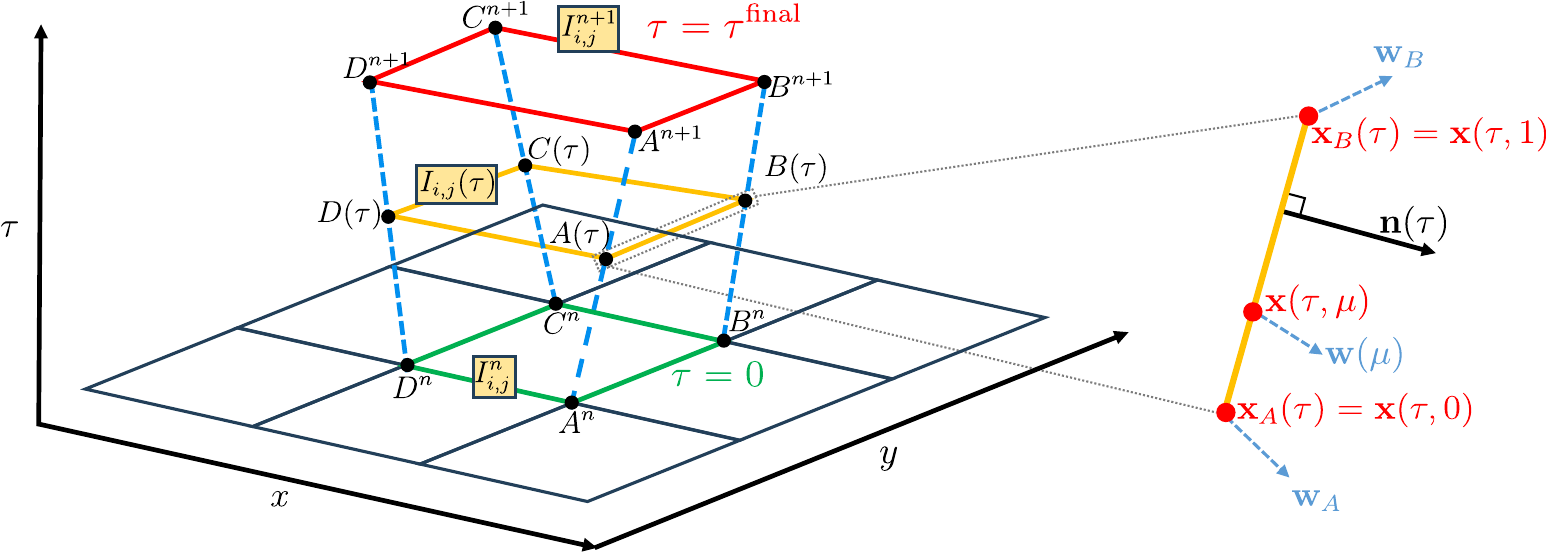}
	\captionsetup{font=small}
	
	
	\caption{Geometric evolution of the cell $I_{i,j}(\tau)=A(\tau)B(\tau)C(\tau)D(\tau)$ during a remapping step. The pseudo-time $\tau$ parametrizes the deformation from the current mesh $\mathcal{T}_h^n$ ($\tau=0$) to the updated mesh $\mathcal{T}_h^{n+1}$ ($\tau=\tau^{\mathrm{final}}$).}
	
	\label{fig:moving_mesh}
\end{figure}

Prior to discretizing the integral equations \eqref{equ:ut0_int_form} and \eqref{equ:high order gcl}, the mesh velocity $\mathbf{w}$ must be determined. As illustrated in Figure~\ref{fig:moving_mesh}, we consider a cell
$
I^n = A^nB^nC^nD^n
$
at time $t^n$.

Let $\mathbf{x}_P^n$ and $\mathbf{x}_P^{n+1}$ ($P \in \{A, B, C, D\}$) denote the vertex positions at $t^n$ and $t^{n+1}$ (determined by the rezoning operator $\mathcal Z_h$). The corresponding vertex velocities are computed as
\begin{equation}\label{equ:grid_vel}
	\mathbf{w}_P = \frac{\mathbf{x}_P^{n+1} - \mathbf{x}_P^n}{\tau^{\mathrm{final}}}, \qquad \text{for } P \in \{A, B, C, D\}.
\end{equation}
These vertex velocities are held constant during the remapping step, while mesh velocities at quadrature points are computed via linear interpolation, as suggested in~\cite{klingenberg2017arbitrary,zhangmin2020,gu2023high}. We simply set $\tau^{\text{final}} = \Delta t$, noting the solution is invariant to the value of $\tau^{\text{final}}$ (Remark~\ref{remark:tautarget_arbitrary}).

\subsubsection{Spatial discretization of $\widetilde{\mathcal{M}}$ and $\mathbf{V}$}\label{sec:remap_spatial}

Consider a generic cell $I_{i,j}$ bounded by edges $\{l_{i,j}^k\}_{k=1}^4$, as illustrated in Figure~\ref{fig:Jac&Gauss}. We spatially discretize \eqref{equ:ut0_int_form} and \eqref{equ:high order gcl} by applying a three-point Gauss--Lobatto quadrature rule along each edge:
\begin{equation}\label{equ:Mevo_semi}
	\frac{\mathrm{d}\widetilde{\mathcal{M}}_{s,r,i,j}}{\mathrm{d}\tau}
	= \sum_{k=1}^4 |l_{i,j}^k| \sum_{g=1}^3
	\omega_g\, x_{k,g}^{\,s} y_{k,g}^{\,r}\,
	(\mathbf{w}_{k,g}\!\cdot\!\mathbf{n}_k)
	\;=:\;
	\mathcal{L}^{\widetilde{\mathcal{M}}_{s,r}}_{i,j}(I_{i,j}),
\end{equation}
\begin{equation}\label{equ:V_semi}
	\frac{\mathrm{d}\mathbf{V}_{i,j}}{\mathrm{d}\tau}
	= -\sum_{k=1}^4 |l_{i,j}^k| \sum_{g=1}^3
	\omega_g\,
	\widehat{\mathbf{G}}\!\left(\mathbf{U}^{\mathrm{int}}_{k,g},
	\mathbf{U}^{\mathrm{ext}}_{k,g},\mathbf{w}_{k,g}\right) 
	\;=:\; \mathcal{L}^{\mathbf{V}}_{i,j}(\widetilde{\mathcal{M}},\mathbf{V},I_{i,j}).
\end{equation}
Here, $|l_{i,j}^k|$ represents the length of edge $l_{i,j}^k$, and $\mathbf{n}_k$ denotes the associated outward unit normal. The terms $\mathbf{x}_{k,g}$, $\mathbf{w}_{k,g}$, and $\omega_g$ correspond to the coordinate, mesh velocity, and weight of the $g$-th quadrature point  on $l_{i,j}^k$, respectively. The numerical flux $\widehat{\mathbf{G}}$ is evaluated using the interior ($\mathbf{U}^{\mathrm{int}}_{k,g}$) and exterior ($\mathbf{U}^{\mathrm{ext}}_{k,g}$) reconstructed point values at $\mathbf{x}_{k,g}$. 
These point values are evaluated from the cell averages $\overline{\mathbf{U}}_{i,j} = \mathbf{V}_{i,j}/\widetilde{\mathcal{M}}_{0,0,i,j}$ and the EGMs $\widetilde{\mathcal{M}}_{s,r,i,j}$ utilizing the WENO reconstruction in Section~\ref{subsec:WENOZQ3}.
We employ the local Lax--Friedrichs (LLF) flux due to its simplicity:
\begin{equation}\label{equ:llf}
	\widehat{\mathbf{G}}\!\left(\mathbf{U}^{\mathrm{int}}_{k,g},
	\mathbf{U}^{\mathrm{ext}}_{k,g},\mathbf{w}_{k,g}\right)
	\!=\! \frac12 \Bigl[
	\mathbf{G}\bigl(\mathbf{U}^{\mathrm{int}}_{k,g},\mathbf{w}_{k,g}\bigr)\cdot\mathbf{n}_k
	+ \mathbf{G}\bigl(\mathbf{U}^{\mathrm{ext}}_{k,g},\mathbf{w}_{k,g}\bigr)\cdot\mathbf{n}_k
	- a^{\max}_{i,j,k}\bigl(\mathbf{U}^{\mathrm{ext}}_{k,g}
	- \mathbf{U}^{\mathrm{int}}_{k,g}\bigr)
	\Bigr],
\end{equation}
where the flux tensor $\mathbf{G}(\mathbf U,\mathbf{w}) = -\mathbf{U}\otimes\mathbf{w} \in \mathbb{R}^{m\times d}$ and $a^{\max}_{i,j,k}=\max_{g}|\mathbf w_{k,g}\cdot\mathbf n_k|$.

To guarantee the TPE(2) property, the reconstruction should be exact for quadratic polynomials. We formalize this requirement by introducing a modified version of $k$-exactness \cite{barth1993recent}, specifically adapted for the EGMs as follows.

\begin{Def}[$k$-exact reconstruction]\label{def:k_exact}
	A reconstruction is termed \emph{$k$-exact} if it satisfies the following property: for any target cell $I_{i^*,j^*}$ on a fixed mesh geometry, if the cell averages $\overline{\mathbf{U}}_{i,j}$ over the reconstruction stencil are generated by a polynomial $\mathbf p(x,y) = \sum_{s+r\le k}\mathbf{c}_{s,r}\,x^s y^r$ via the relation
	\begin{equation}
		\overline{\mathbf{U}}_{i,j}
		= \frac{1}{\widetilde{\mathcal{M}}_{0,0,i,j}}
		\sum_{s+r\le k}\mathbf{c}_{s,r}\,\widetilde{\mathcal{M}}_{s,r,i,j},
	\end{equation}
	then the reconstructed value (denoted as $\mathbf{U}_h$) at any point $(x,y)$ satisfies
	\[
	\mathbf{U}_h(x,y)
	= \sum_{s+r\le k}\mathbf{c}_{s,r}\,x^s y^r.
	\]
\end{Def}

\begin{remark}
	Note that Definition \ref{def:k_exact} implies that the integral of the polynomial is \emph{approximated} using the EGMs, i.e.,
	$$
	\frac{1}{|I_{i,j}|}\!\int_{I_{i,j}}\!\sum_{s+r\le k}\!\mathbf{c}_{s,r}x^s y^r\mathrm{d}x\mathrm{d}y
	\!= \!\frac{1}{{\mathcal{M}}_{0,0,i,j}}\!\sum_{s+r\le k}\!\mathbf{c}_{s,r}\,{\mathcal{M}}_{s,r,i,j}
	\!\approx\! \frac{1}{\widetilde{\mathcal{M}}_{0,0,i,j}}\!\sum_{s+r\le k}\!\mathbf{c}_{s,r}\,\widetilde{\mathcal{M}}_{s,r,i,j}.
	$$
\end{remark}
 
If the cell averages over the entire computational domain are generated by a single quadratic polynomial $\mathbf p(x,y)=\sum_{s+r\le 2} \mathbf{c}_{s,r}\,x^s y^r$ and the reconstruction is $2$-exact, the reconstructed point values coincide across cell interfaces:
\begin{equation}\label{equ:Ucoincide}
	\mathbf{U}^{\mathrm{int}}_{k,g}
	= \mathbf{U}^{\mathrm{ext}}_{k,g}
	= \mathbf p(x_{k,g}^{\,s},y_{k,g}^{\,r}).
\end{equation}
This observation leads to the following lemma.

\begin{lemma}[Linear relation]\label{lem:linear_relation}
	Assume the reconstruction is $2$-exact. If the cell averages $\overline{\mathbf{U}}_{i,j}$ satisfy
	\[
	\overline{\mathbf{U}}_{i,j}
	= \frac{1}{\widetilde{\mathcal{M}}_{0,0,i,j}}
	\sum_{s+r\le 2}\mathbf{c}_{s,r}\,\widetilde{\mathcal{M}}_{s,r,i,j}
	\]
	for all cells $I_{i,j}$, where $\mathbf{c}_{s,r}\in\mathbb{R}^m$ are constant vectors, then the following relation holds for every cell:
	\begin{equation*}
		\mathcal{L}^{\mathbf{V}}_{i,j}(\widetilde{\mathcal{M}},\mathbf{V},I_{i,j})
		= \sum_{s+r\le 2}
		\mathbf{c}_{s,r}\,
		\mathcal{L}^{\widetilde{\mathcal{M}}_{s,r}}_{i,j}(I_{i,j}).
	\end{equation*}
\end{lemma}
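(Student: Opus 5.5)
The proof is a direct substitution into the semi-discrete operator \eqref{equ:V_semi}. The plan has three steps: identify the reconstructed traces, evaluate the numerical flux on them, and then compare term by term with \eqref{equ:Mevo_semi}.

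\emph{Step 1: the reconstructed traces.} By hypothesis, the cell averages on every cell are generated, through the EGMs, by the single quadratic $\mathbf p(x,y)=\sum_{s+r\le 2}\mathbf c_{s,r}x^sy^r$. Since the reconstruction is $2$-exact (Definition~\ref{def:k_exact}), applying it from either side of an edge returns $\mathbf p$ itself. Hence at every edge quadrature point $\mathbf x_{k,g}$ we have \eqref{equ:Ucoincide}:
\[
\mathbf U^{\mathrm{int}}_{k,g}=\mathbf U^{\mathrm{ext}}_{k,g}=\mathbf p(\mathbf x_{k,g}).
\]
The exterior value comes from the neighbour's reconstruction, which is $2$-exact for the same polynomial because the hypothesis holds for all cells and hence on every stencil. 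Note that the cell averages are formed as $\overline{\mathbf U}_{i,j}=\mathbf V_{i,j}/\widetilde{\mathcal M}_{0,0,i,j}$. So the hypothesis is exactly the input condition of Definition~\ref{def:k_exact}, with the EGMs in place of the exact moments, and nothing about the exact geometry is needed.

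\emph{Step 2: the numerical flux.} Insert equal states into the LLF flux \eqref{equ:llf}. The dissipation term $a^{\max}_{i,j,k}(\mathbf U^{\mathrm{ext}}-\mathbf U^{\mathrm{int}})$ vanishes, and the flux reduces to its consistent physical value:
\[
\widehat{\mathbf G}=\mathbf G(\mathbf p(\mathbf x_{k,g}),\mathbf w_{k,g})\cdot\mathbf n_k=-\mathbf p(\mathbf x_{k,g})\,(\mathbf w_{k,g}\cdot\mathbf n_k).
\]
This uses $\mathbf G(\mathbf U,\mathbf w)=-\mathbf U\otimes\mathbf w$, so contracting with $\mathbf n_k$ gives $-\mathbf U(\mathbf w\cdot\mathbf n_k)$.

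\emph{Step 3: comparison with \eqref{equ:Mevo_semi}.} Substituting into \eqref{equ:V_semi}, the two minus signs cancel, giving
\[
\mathcal L^{\mathbf V}_{i,j}=\sum_{k=1}^4|l^k_{i,j}|\sum_{g=1}^3\omega_g\sum_{s+r\le2}\mathbf c_{s,r}\,x_{k,g}^s y_{k,g}^r(\mathbf w_{k,g}\cdot\mathbf n_k).
\]
Interchanging the finite sums and pulling out the constant vectors $\mathbf c_{s,r}$ leaves, for each $(s,r)$, exactly the expression defining $\mathcal L^{\widetilde{\mathcal M}_{s,r}}_{i,j}(I_{i,j})$ in \eqref{equ:Mevo_semi}. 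This works because both operators use the same edges, the same quadrature points and weights, the same normals, and the same interpolated mesh velocities $\mathbf w_{k,g}$. That gives the claimed identity.

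\emph{Main obstacle.} The only delicate point is Step 1: making sure the traces coincide exactly, rather than approximately, on both sides of every edge. This requires the hypothesis to hold on all cells of both neighbouring stencils, and the reconstruction to use the EGMs $\widetilde{\mathcal M}$ rather than the exact moments $\mathcal M$. Boundary edges need periodic or exact ghost data, which I would assume as is implicit in ``for all cells''. Once that is settled, the rest is pure algebra. Notably, no exactness of the edge quadrature is needed, since the same discrete rule appears on both sides of the identity.
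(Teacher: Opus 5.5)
Your proposal is correct and follows essentially the same route as the paper's proof: substitute the coinciding traces \eqref{equ:Ucoincide} into the LLF flux so the dissipation term vanishes, use $\mathbf G=-\mathbf U\otimes\mathbf w$ to cancel the sign, then expand $\mathbf p$ in monomials and interchange the finite sums to recover $\mathcal L^{\widetilde{\mathcal M}_{s,r}}_{i,j}$. Your extra remarks are accurate refinements of points the paper leaves implicit: the exterior traces also need 2-exact reconstructions on the neighbouring stencils, boundary edges need ghost data, and no quadrature exactness is used.
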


\begin{proof}
	Substituting \eqref{equ:Ucoincide} and the flux definition $\mathbf{G} = -\mathbf{U}\otimes\mathbf{w}$ into \eqref{equ:llf} yields
	\begin{align*}
		\mathcal{L}^{\mathbf{V}}_{i,j}(\widetilde{\mathcal{M}},\mathbf{V},I_{i,j})
		&= \sum_{k=1}^4 |l_{i,j}^k| \sum_{g=1}^3
		\omega_g\,
		(\mathbf{U}^{\mathrm{int}}_{k,g}\otimes\mathbf{w}_{k,g})\!\cdot\!\mathbf{n}_k \\
		&= \sum_{s+r\le 2}\mathbf{c}_{s,r}
		\sum_{k=1}^4 |l_{i,j}^k| \sum_{g=1}^3
		\omega_g\,x_{k,g}^{\,s}y_{k,g}^{\,r}\,
		(\mathbf{w}_{k,g}\!\cdot\!\mathbf{n}_k) \\
		&= \sum_{s+r\le 2}\mathbf{c}_{s,r}\,
		\mathcal{L}^{\widetilde{\mathcal{M}}_{s,r}}_{i,j}(I_{i,j}),
	\end{align*}
	which completes the proof.
\end{proof}

This linear relation is crucial for the TPE(2) property established in Theorem~\ref{thm:poly-exact}. 
In the present work, the required 2-exactness of the reconstruction is attained via the hybrid WENO strategy described in Section~\ref{subsec:WENOZQ3}.

\subsubsection{Full remapping operator $\mathcal{R}_h$}
\label{sec:remap_full}
Within the TPE(2) RMM framework, the physical evolution operator $\mathcal{E}_h$ first advances the numerical solution from $t^n$ to $t^{n+1}$ on the mesh $\mathcal{T}_h^n$, yielding the intermediate cell averages $\overline{\mathbf{U}}^{n+1,0}_{i,j}$. 
To initialize the remapping step at pseudo-time $\tau^0=0$, we convert these averages into integrated conserved variables:
$$\mathbf{V}^0_{i,j} := \overline{\mathbf{U}}^{n+1,0}_{i,j} \, \widetilde{\mathcal{M}}^0_{0,0,i,j},$$
where the EGMs $\widetilde{\mathcal{M}}^0_{s,r,i,j}$ are inherited from the previous time step.

To achieve third-order accuracy in pseudo-time, the semi-discrete system \eqref{equ:Mevo_semi}--\eqref{equ:V_semi} is advanced using the SSPRK3 method. One SSPRK3 update from $\tau^\ell$ to $\tau^{\ell+1}=\tau^\ell+\Delta\tau$ is given by
\begin{equation}\label{equ:ssprk3}
	\begin{cases}
		(\cdot)_{i,j}^{(1)}
		= (\cdot)_{i,j}^{\ell}
		+ \Delta \tau\,\mathcal{L}_{i,j}\!\left(\widetilde{\mathcal{M}}^{\ell},\mathbf{V}^{\ell},I_{i,j}^{\ell}\right),\\[0.5em]
		(\cdot)_{i,j}^{(2)}
		= \dfrac{3}{4}(\cdot)_{i,j}^{\ell}
		+ \dfrac{1}{4}\left[ (\cdot)_{i,j}^{(1)}
		+ \Delta \tau\,\mathcal{L}_{i,j}\!\left(\widetilde{\mathcal{M}}^{(1)},\mathbf{V}^{(1)},I_{i,j}^{(1)}\right)\right],\\[0.5em]
		(\cdot)_{i,j}^{\,\ell+1}
		= \dfrac{1}{3}(\cdot)_{i,j}^{\ell}
		+ \dfrac{2}{3}\left[ (\cdot)_{i,j}^{(2)}
		+ \Delta \tau\,\mathcal{L}_{i,j}\!\left(\widetilde{\mathcal{M}}^{(2)},\mathbf{V}^{(2)},I_{i,j}^{(2)}\right)\right],
	\end{cases}
\end{equation}
where the pseudo-time step size $\Delta \tau$ is determined by the CFL condition.
At each stage, the cell averages are recovered via
\begin{equation}
	\overline{\mathbf{U}}_{i,j} = \frac{\mathbf{V}_{i,j}}{\widetilde{\mathcal{M}}_{0,0,i,j}}.
\end{equation}
Repeating this procedure until $\tau=\tau^{\mathrm{final}}$ completes the remapping, yielding
\begin{equation}
	\overline{\mathbf{U}}_h^{n+1}
	= \overline{\mathbf{U}}_h^{n+1,N_\ell}
	= \mathcal{R}_h\left( \overline{\mathbf{U}}_h^{n+1,0}; \,\mathcal{T}_h^n \!\to\! \mathcal{T}_h^{n+1} \right).
\end{equation}


\vspace{0.5em}
\noindent\textbf{Geometry update.}
As illustrated in Figure~\ref{fig:moving_mesh}, the vertex trajectories are linear in pseudo-time. The position of a vertex $P \in \{A, B, C, D\}$ at pseudo-time $\tau$ is explicitly given by
\begin{equation}\label{equ:vertex_location}
	\mathbf{x}_P(\tau) = \mathbf{x}_P^n + \tau\,\mathbf{w}_P \qquad \text{for } P \in \{A, B, C, D\}.
\end{equation}
Let $I_{i,j}(\tau) = A(\tau)B(\tau)C(\tau)D(\tau)$ denote the cell formed by these vertices. The geometries required for the SSPRK3 stages correspond to:
\begin{equation}\label{equ:vertex_location_RK}
	I_{i,j}^{\ell} = I_{i,j}(\tau^\ell), \quad
	I_{i,j}^{(1)} = I_{i,j}(\tau^\ell + \Delta \tau), \quad
	I_{i,j}^{(2)} = I_{i,j}\left(\tau^\ell + \tfrac{1}{2}\Delta \tau\right).
\end{equation}
These configurations determine the Gauss--Lobatto quadrature points on each edge.

\subsection{TPE(2) physical evolution $\mathcal{E}_h$}\label{sec:phys_evol}
This subsection details the TPE(2) physical evolution operator $\mathcal{E}_h$ on fixed meshes, where the geometric moments are time-invariant. Since $\widetilde{\mathcal{M}}_{s,r,i,j}^\ell = \mathcal{M}_{s,r,i,j}(\tau^\ell)$ for $s+r\leq 2$ (Theorem~\ref{thm:coincide}), we omit the tilde notation throughout this subsection.




Using the notation in \eqref{equ:Mevo_semi}--\eqref{equ:V_semi}, the semi-discrete discretization of the hyperbolic system \eqref{equ:hyperbolic} on a fixed grid is given by
\begin{equation}\label{equ:U_semi}
	\frac{\mathrm{d}\overline{\mathbf U}_{i,j}}{\mathrm{d}t}
	= -\frac{1}{\mathcal M_{0,0}}\sum_{k=1}^4 |l_{i,j}^k| \sum_{g=1}^3
	\omega_g\,
	\widehat{\mathbf{F}}\!\left(\mathbf{U}^{\mathrm{int}}_{k,g},
	\mathbf{U}^{\mathrm{ext}}_{k,g}\right)
	\;=:\; \mathcal{L}^{\overline{\mathbf U}}_{i,j}(\overline{\mathbf U}).
\end{equation}
In \eqref{equ:U_semi}, $\mathbf{U}^{\mathrm{int}}_{k,g}$ and $\mathbf{U}^{\mathrm{ext}}_{k,g}$ are also obtained via the 2-exact hybrid WENO reconstruction described in Section~\ref{subsec:WENOZQ3}. We adopt the LLF numerical flux
\begin{equation}\label{equ:llf_fixed}
	\widehat{\mathbf{F}}\!\left(\mathbf{U}^{\mathrm{int}}_{k,g},
	\mathbf{U}^{\mathrm{ext}}_{k,g}\right)
	= \frac12 \Bigl[
	\mathbf{F}\bigl(\mathbf{U}^{\mathrm{int}}_{k,g}\bigr)\cdot\mathbf{n}_k
	+ \mathbf{F}\bigl(\mathbf{U}^{\mathrm{ext}}_{k,g}\bigr)\cdot\mathbf{n}_k
	- \tilde{a}^{\max}_{i,j,k}\bigl(\mathbf{U}^{\mathrm{ext}}_{k,g}
	- \mathbf{U}^{\mathrm{int}}_{k,g}\bigr)
	\Bigr],
\end{equation}
where $\mathbf{F}$ is the flux tensor defined in \eqref{equ:hyperbolic}. The parameter $\tilde{a}^{\max}_{i,j,k}$ represents the local maximum spectral radius of $\partial(\mathbf{F}(\mathbf{U})\cdot\mathbf{n}_k)/\partial\mathbf{U}$ along $l_{i,j}^k$. The fixed-grid evolution $\mathcal{E}_h$ from physical time $t^n$ to
$t^{n+1} = t^n + \Delta t$ is also performed using the third-order SSPRK3 method:
\begin{equation}\label{equ:ssprk3_fixed}
	\begin{cases}
		\overline{\mathbf U}_{i,j}^{(1)} 
		= \overline{\mathbf U}_{i,j}^{n}
		+ \Delta t\,\mathcal{L}^{\overline{\mathbf U}}_{i,j}\bigl(\overline{\mathbf U}^{n}\bigr),\\[0.4em]
		\overline{\mathbf U}_{i,j}^{(2)}
		= \dfrac{3}{4}\overline{\mathbf U}_{i,j}^{n}
		+ \dfrac{1}{4}\Bigl( \overline{\mathbf U}_{i,j}^{(1)}
		+ \Delta t\,\mathcal{L}^{\overline{\mathbf U}}_{i,j}\bigl(\overline{\mathbf U}^{(1)}\bigr)\Bigr),\\[0.6em]
		\overline{\mathbf U}_{i,j}^{\,n+1,0}
		= \dfrac{1}{3}\overline{\mathbf U}_{i,j}^{n}
		+ \dfrac{2}{3}\Bigl( \overline{\mathbf U}_{i,j}^{(2)}
		+ \Delta t\,\mathcal{L}^{\overline{\mathbf U}}_{i,j}\bigl(\overline{\mathbf U}^{(2)}\bigr)\Bigr).
	\end{cases}
\end{equation}
The fixed-grid evolution operator $\mathcal{E}_h$ satisfies the TPE(2) property, as demonstrated in Proposition \ref{prop:TPE_fixed}.

\subsection{Rezoning operator $\mathcal{Z}_h$ and mesh regularity assumptions}
\label{subsec:rezoning}
While the choice of rezoning strategy significantly influences the overall performance of moving-mesh methods~\cite{huang1994,BRACKBILL1982342,HIRT1974227,Budd_Huang_Russell_2009}, the specific design of such algorithms lies beyond the scope of this work. Notably, the TPE(2) RMM scheme preserves the TPE(2) property regardless of the specific rezoning strategy employed. To ensure the geometric well-posedness of the spatial discretization, we impose the following mesh regularity condition.

\begin{assumption}[Mesh regularity]\label{assump:non-twisting}
	As illustrated in Figure~\ref{fig:moving_mesh}, for every pseudo-time $\tau$ and every cell $I_{i,j}(\tau)$, the following conditions hold:
	\begin{enumerate}
		\item[(i)] The four vertices of $I_{i,j}(\tau)$, denoted by $A_{i,j}(\tau)$, $B_{i,j}(\tau)$, $C_{i,j}(\tau)$, and $D_{i,j}(\tau)$, are nondegenerate (pairwise distinct) and strictly ordered counterclockwise.
		\item[(ii)] $I_{i,j}(\tau)$ remains a convex quadrilateral.
	\end{enumerate}
\end{assumption}

\subsection{2-exact hybrid third-order WENO reconstruction}\label{subsec:WENOZQ3}

To achieve high resolution while effectively suppressing spurious oscillations, we employ the third-order WENO reconstruction proposed in \cite{zhu2018new}. 
However, standard WENO procedures may compromise the capability to preserve quadratic polynomials exactly.
To restore 2-exactness, we adopt a hybrid strategy utilizing the KXRCF troubled-cell indicator $\mathcal{I}_{i,j} \in \{0, 1\}$ \cite{li2010hybrid}. The reconstruction polynomial $\mathbf{U}_h(x,y)$ on cell $I_{i,j}$ is defined as:
\begin{equation}
	\mathbf{U}_h(x,y) = 
	\begin{cases} 
		\mathbf{p}^{\text{opt}}(x,y), & \text{if } \mathcal{I}_{i,j} = 0 \quad (\text{smooth region}), \\
		\mathbf{p}^{\text{WENO}}(x,y), & \text{if } \mathcal{I}_{i,j} = 1 \quad (\text{troubled cell}),
	\end{cases}
\end{equation}
where $\mathbf{p}^{\text{opt}}$ is the optimal quadratic polynomial (which is 2-exact by construction) and $\mathbf{p}^{\text{WENO}}$ is the robust WENO reconstruction. If $\mathbf U_0 \in \mathbb P_2^m$, the KXRCF indicator automatically yields $\mathcal{I}_{i,j} = 0$, thereby maintaining the 2-exactness of the reconstruction.

Theorem~\ref{thm:TPE2-RMM} in Section~\ref{sec:tpe_full} rigorously establishes that, with this 2-exact reconstruction, the proposed RMM scheme fully satisfies the TPE(2) property. This is further verified by numerical experiments in Section~\ref{sec:numerical_tests}.

\begin{remark}[Implementation via EGMs]\label{rem:EGM_reconstruction}
	Consider a target cell $I_{i^*\!,j^*\!}$. To ensure well-conditioned reconstruction, we employ the normalized basis centered at the cell centroid $(x_c, y_c)\!=\!\left(\frac{\widetilde{\mathcal{M}}_{1,0,i^*\!,j^*\!}}{\widetilde{\mathcal{M}}_{0,0,i^*\!,j^*\!}},\frac{\widetilde{\mathcal{M}}_{0,1,i^*\!,j^*\!}}{\widetilde{\mathcal{M}}_{0,0,i^*\!,j^*\!}}\right)$ with scaling length $\hat{h} = \sqrt{\widetilde{\mathcal{M}}_{0,0,i^*\!,j^*\!}}$:
	\begin{equation}\label{eq:norm_basis}
		\psi_{s,r}(x,y) = \left(\frac{x-x_c}{\hat{h}}\right)^s \left(\frac{y-y_c}{\hat{h}}\right)^r, \quad 0 \le s+r \le 2.
	\end{equation}
	The coefficients $\mathbf{c}_{s,r}$ of $\mathbf{p}^{\text{opt}}(x,y) = \sum_{s+r\leq2} \mathbf{c}_{s,r} \psi_{s,r}(x,y)$ are determined by a least-squares fit to the integrated averages on all stencil cells $I_{i,j}\in S_{i^*\!,j^*\!}$. The integrals of the basis functions are evaluated using the EGMs via
	\begin{equation} 
		\int_{I_{i,j}} \psi_{s,r}(x,y) \,\mathrm{d}x\mathrm{d}y = \frac{1}{{\hat{h}}^{s+r}} \sum_{p=0}^s \sum_{q=0}^r \binom{s}{p}\binom{r}{q} (-x_c)^{s-p} (-y_c)^{r-q} \widetilde{\mathcal{M}}_{p,q,i,j}.
	\end{equation}
	This demonstrates that the reconstruction depends solely on $\widetilde{\mathcal{M}}_{s,r}$ with $s+r \le 2$.
\end{remark}

\begin{remark}
	To guarantee the affine invariance of the WENO reconstruction \cite{WANG2022630}, we divide the smoothness indicators $\beta$ in \cite{zhu2018new} by the local variance of the reconstructed variable.
\end{remark}


\section{Theoretical analysis}\label{sec:analysis}
This section provides a rigorous analysis of the proposed TPE(2) RMM scheme. We establish the TPE(2) property for the physical evolution operator \(\mathcal{E}_h\), the remapping operator \(\mathcal{R}_h\), and thus the full RMM scheme. We also prove the geometric consistency of the EGMs, and derive the estimation for the required pseudo-time levels $N_\ell$.

\subsection{The TPE(2) property of the physical evolution operator \(\mathcal{E}_h\)}
\label{subsubsec:TPE(2)offixed} 
Consider the 2D linear advection system 
\begin{equation}\label{equ:linearAdv_fix2d}
	\frac{\partial \mathbf U}{\partial t}
	+ \nabla\cdot\mathbf F(\mathbf U)=\mathbf 0,\qquad \mathbf F(\mathbf U)=\mathbf U\otimes\mathbf{a},\qquad \mathbf a=(a_x,a_y).
\end{equation}
If $\mathbf U_0\in\mathbb{P}_2^m$, the semi-discrete scheme \eqref{equ:U_semi} is spatially exact. This follows because the three-point Gauss--Lobatto rule integrates cubic polynomials exactly \cite[Section 2.2.3]{canuto2006spectral}. Consequently, the SSPRK3 method updates the solution via the operator
\begin{equation}
	\overline{\mathbf{U}}^{n+1} = R(\Delta t \mathcal{L}) \overline{\mathbf{U}}^{n}, \quad R(z) = 1 + z + \frac{z^2}{2} + \frac{z^3}{6},
\end{equation}
where $\mathcal{L}$ represents the exact spatial operator.
Since the exact evolution is $e^{\Delta t \mathcal{L}}$ and $(\mathcal{L})^3 \overline{\mathbf{U}}^{n} = \mathbf{0}$ (as third derivatives of quadratics vanish), the numerical update $R(\Delta t \mathcal{L})$ coincides with the exact evolution $e^{\Delta t \mathcal{L}}$. Thus, we have:

\begin{proposition}[TPE(2) property of $\mathcal{E}_h$]\label{prop:TPE_fixed}
	Consider the linear advection system \eqref{equ:linearAdv_fix2d}. Let the system be solved using the semi-discrete scheme \eqref{equ:U_semi} and the SSPRK3 method \eqref{equ:ssprk3_fixed}. If at time $t^n$,
	\[
	\overline{\mathbf U}_{i,j}^n
	= \frac{1}{|I_{i,j}|} \int_{I_{i,j}} \mathbf{p}(x,y) \,\mathrm{d}x\mathrm{d}y
	= \frac{1}{\mathcal M_{0,0,i,j}} \sum_{s+r\leq 2} \mathbf c_{s,r} \mathcal M_{s,r,i,j}.
	\]
	where $\mathbf{p}(x,y)=\sum_{s+r\leq 2}\mathbf c_{s,r} x^s y^r \in \mathbb{P}_2^m$.
	Then, at time $t^{n+1}=t^n+\Delta t$, 
	\[
	\overline{\mathbf U}_{i,j}^{n+1}
	= \frac{1}{\mathcal M_{0,0,i,j}} \sum_{s+r\leq 2} \mathbf c_{s,r} \left[ \sum_{k=0}^s \sum_{\ell=0}^r \binom{s}{k}\binom{r}{\ell} (-a_x\Delta t)^{s-k} (-a_y\Delta t)^{r-\ell} \mathcal M_{k,\ell,i,j} \right].
	\]
	which coincides with the cell averages of the exact solution $\mathbf{p}(x\!-\!a_x\Delta t,y\!-\!a_y\Delta t)$.
\end{proposition}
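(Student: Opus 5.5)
The plan is to show that, on the fixed mesh, each SSPRK3 stage maps the finite-dimensional space of ``quadratic-generated'' cell averages into itself and acts there exactly like the continuous advection operator. Let $\mathcal{P}$ denote the set of mesh data of the form $\overline{\mathbf U}_{i,j}=\frac{1}{\mathcal M_{0,0,i,j}}\sum_{s+r\le2}\mathbf c_{s,r}\mathcal M_{s,r,i,j}$, i.e.\ exact cell averages of some $\mathbf q\in\mathbb P_2^m$. On the fixed mesh the moments are exact, so $\mathcal P$ is the image of $\mathbb P_2^m$ under the cell-averaging map $\Pi$.

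\textbf{Step 1: the semi-discrete operator on $\mathcal P$.} For $\overline{\mathbf U}=\Pi\mathbf q$, 2-exactness of the reconstruction (Definition~\ref{def:k_exact}, with the KXRCF indicator switching to $\mathbf p^{\rm opt}$) gives $\mathbf U^{\rm int}_{k,g}=\mathbf U^{\rm ext}_{k,g}=\mathbf q(\mathbf x_{k,g})$. Therefore the dissipation term in \eqref{equ:llf_fixed} vanishes and $\widehat{\mathbf F}=\mathbf q(\mathbf x_{k,g})(\mathbf a\cdot\mathbf n_k)$. The integrand $\mathbf q\,(\mathbf a\cdot\mathbf n_k)$ is quadratic along each straight edge, so three-point Gauss--Lobatto integrates it exactly. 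By the divergence theorem,
\[
\mathcal L^{\overline{\mathbf U}}_{i,j}(\Pi\mathbf q)=-\frac{1}{\mathcal M_{0,0,i,j}}\int_{I_{i,j}}\mathbf a\cdot\nabla\mathbf q\,\mathrm dx\,\mathrm dy=\Pi(L\mathbf q),\qquad L:=-\mathbf a\cdot\nabla .
\]
Since $L$ maps $\mathbb P_2^m$ into $\mathbb P_1^m\subset\mathbb P_2^m$, we get $\mathcal L^{\overline{\mathbf U}}\circ\Pi=\Pi\circ L$ on $\mathbb P_2^m$. The scheme is linear on $\mathcal P$ because each stage stays in $\mathcal P$ and the indicator stays off there.

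\textbf{Step 2: the time stepping.} Composing the three stages of \eqref{equ:ssprk3_fixed} and using Step 1 inductively, each intermediate stage is $\Pi$ of a polynomial, and the final update equals $\Pi\,R(\Delta t L)\mathbf p$ with $R(z)=1+z+z^2/2+z^3/6$. This is the standard fact that SSPRK3 has this stability polynomial for linear autonomous operators, which I would verify by direct expansion. Since $L^3\mathbf p=0$ for quadratic $\mathbf p$, we have $R(\Delta tL)\mathbf p=\sum_{j\ge0}(\Delta tL)^j\mathbf p/j!$. By Taylor's theorem, which is exact for polynomials, this equals $\mathbf p(\mathbf x-\mathbf a\Delta t)$.

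\textbf{Step 3: the explicit formula.} Expand $\mathbf p(x-a_x\Delta t,y-a_y\Delta t)=\sum\mathbf c_{s,r}(x-a_x\Delta t)^s(y-a_y\Delta t)^r$ by the binomial theorem and integrate termwise against $\mathcal M_{k,\ell,i,j}$ to obtain the stated expression.

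The main point needing care is Step 1. One must justify that the reconstruction is genuinely 2-exact on every stage value, i.e.\ that the indicator does not fire and the least-squares fit recovers $\mathbf q$. One must also check that Gauss--Lobatto exactness applies on straight edges with a constant normal. Both follow from the stated 2-exactness and the edge geometry.
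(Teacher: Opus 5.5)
Your proposal is correct and follows the paper's own argument: first, the scheme \eqref{equ:U_semi} is spatially exact on quadratic-generated cell averages (2-exact reconstruction plus exact Gauss--Lobatto edge quadrature); second, the SSPRK3 stability polynomial $R(z)=1+z+z^2/2+z^3/6$ agrees with $e^{z}$ on quadratics because $L^3$ annihilates them. Your intertwining identity $\mathcal L^{\overline{\mathbf U}}\circ\Pi=\Pi\circ L$ and the binomial expansion in Step 3 spell out what the paper leaves implicit when it refers to ``the exact spatial operator'' and states the final moment formula.
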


\begin{remark}
	The conclusion of Proposition~\ref{prop:TPE_fixed} naturally extends to TPE(3) if the semi-discrete scheme~\eqref{equ:U_semi} employs a 3-exact reconstruction.
\end{remark}

\subsection{The TPE(2) property of the remapping operator \(\mathcal{R}_h\)}
\label{subsubsec:TPE(2)ofRemap}
By utilizing EGMs, the semi-discrete scheme \eqref{equ:Mevo_semi}--\eqref{equ:V_semi} preserves quadratic polynomials exactly.

\begin{theorem}[Quadratic polynomial preservation under Forward Euler]
	\label{thm:poly-exact}
	Let the semi-discrete systems \eqref{equ:Mevo_semi}--\eqref{equ:V_semi} be advanced by the forward Euler method:
	\begin{equation}\label{equ:euler_forward}
		(\cdot)_{i,j}^{\ell+1} 
		= (\cdot)_{i,j}^{\ell} 
		+ \Delta \tau\,\mathcal{L}_{i,j}\bigl(\widetilde{\mathcal{M}}^{\ell},\mathbf{V}^{\ell},I_{i,j}^{\ell}\bigr).
	\end{equation}
	Suppose that at pseudo-time $\tau^\ell$, the integrated variables and geometric moments satisfy the linear relation
	\begin{equation}\label{equ:taun_poly}
		\mathbf{V}_{i,j}^{\ell} = \sum_{s+r\le 2}\mathbf{c}_{s,r}\,\widetilde{\mathcal{M}}_{s,r,i,j}^{\ell}, \quad \forall i,j,
	\end{equation}
	which corresponds to the integration of a quadratic polynomial $\mathbf{p}(x,y) \!=\! \sum_{s+r\le 2}\!\mathbf{c}_{s,r}\,x^s y^r.$
	Then, the coefficients $\{\mathbf{c}_{s,r}\}$ are preserved at the next level $\tau^{\ell+1}$:
	\begin{equation}\label{equ:taunp1_poly_linear}
		\mathbf{V}_{i,j}^{\ell+1} = \sum_{s+r\le 2}\mathbf{c}_{s,r}\,\widetilde{\mathcal{M}}_{s,r,i,j}^{\ell+1}.
	\end{equation} 
	Consequently, the updated cell averages $\mathbf{V}^{\ell+1}/\widetilde{\mathcal{M}}^{\ell+1}_{0,0}$ coincide with the cell averages of the exact solution $\mathbf{p}(x,y)$ in the sense of EGM-based integration. Any 2-exact reconstruction based on $\{\widetilde{\mathcal{M}}^{\ell+1}_{s,r}\!, \mathbf{V}^{\ell+1}\!/\!\widetilde{\mathcal{M}}^{\ell+1}_{0,0}\}$ exactly recovers $\mathbf{p}(x,y)$.
\end{theorem}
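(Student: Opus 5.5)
The proof is a direct consequence of Lemma~\ref{lem:linear_relation} combined with the linearity of the forward Euler update. The plan is first to verify that the hypothesis of the lemma holds at level $\tau^\ell$. Dividing \eqref{equ:taun_poly} by $\widetilde{\mathcal{M}}^{\ell}_{0,0,i,j}$ gives
\[
\overline{\mathbf U}^\ell_{i,j}=\frac{1}{\widetilde{\mathcal{M}}^{\ell}_{0,0,i,j}}\sum_{s+r\le 2}\mathbf c_{s,r}\widetilde{\mathcal{M}}^{\ell}_{s,r,i,j}
\]
for every cell. Here $\widetilde{\mathcal{M}}^{\ell}_{0,0,i,j}\neq 0$ is tacitly assumed, as it is in the definition of the cell averages. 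Since the reconstruction is $2$-exact with respect to the EGMs (Definition~\ref{def:k_exact}), the interior and exterior traces at every Gauss--Lobatto point agree with $\mathbf p(\mathbf x_{k,g})$, as in \eqref{equ:Ucoincide}. Lemma~\ref{lem:linear_relation} then yields
\[
\mathcal{L}^{\mathbf V}_{i,j}(\widetilde{\mathcal{M}}^\ell,\mathbf V^\ell,I^\ell_{i,j})=\sum_{s+r\le2}\mathbf c_{s,r}\,\mathcal{L}^{\widetilde{\mathcal{M}}_{s,r}}_{i,j}(I^\ell_{i,j}).
\]
Note that both sides are evaluated on the same geometry $I^\ell_{i,j}$ and use the same quadrature points and mesh velocities $\mathbf w_{k,g}$.

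Next, the forward Euler step \eqref{equ:euler_forward} is applied to $\mathbf V$ and to each $\widetilde{\mathcal{M}}_{s,r}$ with the same $\Delta\tau$. This gives
\[
\mathbf V^{\ell+1}_{i,j}=\sum\mathbf c_{s,r}\widetilde{\mathcal{M}}^{\ell}_{s,r,i,j}+\Delta\tau\sum\mathbf c_{s,r}\mathcal{L}^{\widetilde{\mathcal{M}}_{s,r}}_{i,j}(I^\ell_{i,j})=\sum\mathbf c_{s,r}\bigl(\widetilde{\mathcal{M}}^{\ell}_{s,r,i,j}+\Delta\tau\,\mathcal{L}^{\widetilde{\mathcal{M}}_{s,r}}_{i,j}(I^\ell_{i,j})\bigr),
\]
and the bracket is exactly $\widetilde{\mathcal{M}}^{\ell+1}_{s,r,i,j}$. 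This establishes \eqref{equ:taunp1_poly_linear}.

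The consequences then follow. Dividing by $\widetilde{\mathcal{M}}^{\ell+1}_{0,0,i,j}$ shows that the updated averages are of the form required in Definition~\ref{def:k_exact}, now on the geometry at $\tau^{\ell+1}$. Hence any $2$-exact reconstruction built from $\{\widetilde{\mathcal{M}}^{\ell+1}_{s,r},\mathbf V^{\ell+1}/\widetilde{\mathcal{M}}^{\ell+1}_{0,0}\}$ returns $\mathbf p$.

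The only delicate point is the use of \eqref{equ:Ucoincide}. It requires the relation to hold on every cell of every stencil, including neighbours across each edge, so that the exterior traces are also $\mathbf p$. For the hybrid WENO scheme, it also requires the indicator to select $\mathbf p^{\mathrm{opt}}$. Both are guaranteed by the hypothesis ``for all $i,j$'' and by the standing $2$-exactness assumption. Beyond this, the argument uses no property of the mesh motion or of $\Delta\tau$, which is why no smoothness condition enters.
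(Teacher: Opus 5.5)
Your proposal is correct and follows the paper's own proof: substitute \eqref{equ:taun_poly} into the forward Euler update, apply Lemma~\ref{lem:linear_relation} to replace $\mathcal{L}^{\mathbf V}_{i,j}$ by $\sum_{s+r\le 2}\mathbf c_{s,r}\,\mathcal{L}^{\widetilde{\mathcal{M}}_{s,r}}_{i,j}$, and recognize each bracket as the forward Euler update of $\widetilde{\mathcal{M}}_{s,r,i,j}$. You also make explicit the assumptions the paper leaves implicit: nonzero evolved volumes, the relation holding on all neighbouring cells so that exterior traces equal $\mathbf p$, and the hybrid indicator selecting $\mathbf p^{\mathrm{opt}}$.
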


\begin{proof}
	By substituting \eqref{equ:taun_poly} into the forward Euler update \eqref{equ:euler_forward} and invoking the linearity property from Lemma~\ref{lem:linear_relation}, we deduce
	\begin{equation*}
		\begin{aligned}
			\mathbf V_{i,j}^{\ell+1}
			&= \mathbf V_{i,j}^{\ell}
			+ \Delta \tau\,\mathcal{L}^{\mathbf{V}}_{i,j}
			\bigl(\widetilde{\mathcal{M}}^\ell,\mathbf{V}^\ell,I_{i,j}^{\ell}\bigr) \\
			&= \sum_{s+r\le 2}\mathbf{c}_{s,r}\,\widetilde{\mathcal{M}}^\ell_{s,r,i,j}
			+ \Delta \tau\sum_{s+r\le 2} \mathbf{c}_{s,r}\,
			\mathcal{L}^{\widetilde{\mathcal{M}}_{s,r}}_{i,j}
			\bigl(I_{i,j}^{\ell}\bigr) \\
			&= \sum_{s+r\leq2}\mathbf{c}_{s,r}
			\left[ \widetilde{\mathcal{M}}_{s,r,i,j}^{\ell}
			+\Delta \tau\,\mathcal{L}^{\widetilde{\mathcal{M}}_{s,r}}_{i,j}
			\bigl(I_{i,j}^{\ell}\bigr) \right] \\
			&= \sum_{s+r\leq2}\mathbf{c}_{s,r}\,\widetilde{\mathcal{M}}_{s,r,i,j}^{\ell+1},
		\end{aligned}
	\end{equation*}
	which completes the proof.
\end{proof}

As the SSPRK3 method \eqref{equ:ssprk3} is formally a convex combination of forward Euler steps, the results of Theorem~\ref{thm:poly-exact} extend immediately to the fully discrete remapping operator $\mathcal{R}_h$. Since remapping solves the transport equation $\partial_\tau \mathbf{U} = \mathbf 0$, we also term this quadratic polynomial preservation as the TPE(2) property.

\begin{proposition}[TPE(2) property of $\mathcal{R}_h$]
	\label{prop:ssprk3_poly_exact}
	The conclusions of Theorem~\ref{thm:poly-exact} remain valid when the semi-discrete system \eqref{equ:Mevo_semi}--\eqref{equ:V_semi} is advanced using the SSPRK3 method.
\end{proposition}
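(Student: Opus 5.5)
The plan is to verify the linear relation \eqref{equ:taun_poly} stage by stage through the SSPRK3 update \eqref{equ:ssprk3}. Each stage is an affine combination of a forward Euler substep and earlier stage values, and the same coefficients are applied to every evolved quantity $(\cdot)=\mathbf V$ and $(\cdot)=\widetilde{\mathcal M}_{s,r}$ with $s+r\le2$. The invariant to propagate is
\[
\mathbf V^{(\star)}_{i,j}=\sum_{s+r\le2}\mathbf c_{s,r}\widetilde{\mathcal M}^{(\star)}_{s,r,i,j}\quad\text{for all } i,j,
\]
with the fixed coefficient vectors $\mathbf c_{s,r}$ taken from level $\tau^\ell$.

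\textbf{Stage 1.} This is exactly the forward Euler step \eqref{equ:euler_forward} from data $(\widetilde{\mathcal M}^\ell,\mathbf V^\ell)$ on the geometry $I^\ell_{i,j}$. Theorem~\ref{thm:poly-exact} gives the invariant at stage $(1)$ directly.

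\textbf{Stage 2.} Let $\mathbf V^{*}=\mathbf V^{(1)}+\Delta\tau\,\mathcal L^{\mathbf V}_{i,j}(\widetilde{\mathcal M}^{(1)},\mathbf V^{(1)},I^{(1)}_{i,j})$ and define $\widetilde{\mathcal M}^{*}_{s,r}$ analogously. The invariant holds at stage $(1)$ for all cells, so the cell averages $\mathbf V^{(1)}/\widetilde{\mathcal M}^{(1)}_{0,0}$ are generated by $\mathbf p$ in the sense of Definition~\ref{def:k_exact}. Hence Lemma~\ref{lem:linear_relation} applies on the geometry $I^{(1)}_{i,j}$. Reasoning as in the proof of Theorem~\ref{thm:poly-exact}, we get $\mathbf V^{*}=\sum\mathbf c_{s,r}\widetilde{\mathcal M}^{*}_{s,r}$. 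Because the relation is linear in $(\mathbf V,\widetilde{\mathcal M})$ with the same $\mathbf c_{s,r}$ at levels $\ell$ and $*$, the combination $\tfrac34(\cdot)^\ell+\tfrac14(\cdot)^{*}$ preserves it. This yields the invariant at stage $(2)$.

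\textbf{Stage 3.} Repeating the argument with geometry $I^{(2)}_{i,j}$ and weights $\tfrac13,\tfrac23$ gives the invariant at $\tau^{\ell+1}$. The consequences about cell averages and 2-exact reconstruction then follow exactly as in Theorem~\ref{thm:poly-exact}. Induction over $\ell$ extends the result to the whole operator $\mathcal R_h$.

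The step I expect to need the most care is checking that Lemma~\ref{lem:linear_relation} is legitimately applicable at the intermediate stages. Three points matter here.
\begin{itemize}
\item The lemma requires only the linear relation between $\mathbf V$ and the \emph{evolved} moments, not the exact moments, and it holds for whatever geometry $I^{(k)}_{i,j}$ is used in the flux quadrature. So it does not matter that $I^{(1)}$ and $I^{(2)}$ are the geometries at $\tau^\ell+\Delta\tau$ and $\tau^\ell+\tfrac12\Delta\tau$.
\item The right-hand side $\mathcal L^{\widetilde{\mathcal M}_{s,r}}_{i,j}$ depends only on the geometry. Therefore $\mathbf V$ and $\widetilde{\mathcal M}$ are advanced with identical quadrature points and velocities, which is what makes the substitution exact.
\item The 2-exact reconstruction, including the hybrid switch, must return $\mathbf p$ at every stage. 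This holds by hypothesis and Definition~\ref{def:k_exact}, because $\widetilde{\mathcal M}^{(k)}_{0,0}>0$ under Assumption~\ref{assump:non-twisting}. Strictly, this requires evolved volumes to stay positive; I would either assume this or note that it follows from Theorem~\ref{thm:coincide}.
\end{itemize}
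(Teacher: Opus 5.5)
Your proposal is correct and follows the paper's own argument. The paper notes that SSPRK3 is a convex combination of forward Euler steps, so Theorem~\ref{thm:poly-exact} applied to each substep on the stage geometries $I^{\ell}_{i,j}$, $I^{(1)}_{i,j}$, $I^{(2)}_{i,j}$, together with the linearity of the relation $\mathbf V=\sum_{s+r\le2}\mathbf c_{s,r}\widetilde{\mathcal M}_{s,r}$, gives the claim; this is exactly your stage-by-stage check.

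One small correction to your side remark on positivity: Theorem~\ref{thm:coincide} identifies the EGMs with the exact moments only at the full levels $\tau^{\ell+1}$, not at the intermediate stages. For example, $\widetilde{\mathcal M}^{(1)}_{0,0}$ is a forward Euler value and differs from $\mathcal M_{0,0}(\tau^\ell+\Delta\tau)$, because $\mathcal M_{0,0}$ is quadratic in $\tau$. So positivity of the stage volumes should be taken from the paper's CFL remark ($C_{\mathrm{CFL}}\le\frac14$) or simply assumed.
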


Theorem~\ref{thm:poly-exact} and Proposition~\ref{prop:ssprk3_poly_exact} hold \emph{regardless of the prescribed mesh motion, mesh quality, or the pseudo-time step size $\Delta\tau$}. Consequently, the remapping operator $\mathcal{R}_h$ maintains stability and high-order accuracy without requiring excessive pseudo-time levels, even when the mesh velocity $\mathbf{w}$ is discontinuous. 

\begin{remark}
	The findings of Theorem~\ref{thm:poly-exact} and Proposition~\ref{prop:ssprk3_poly_exact} generalize naturally to polynomials of arbitrary degree $k$. Specifically, $\mathcal{R}_h$ achieves the TPE($k$) property provided that a $k$-exact reconstruction and any SSP time discretization are employed.
\end{remark}

\subsection{The TPE(2) property of the full RMM scheme}\label{sec:tpe_full}

By coupling EGMs with a 2-exact hybrid WENO reconstruction, the full RMM scheme achieves the TPE(2) property.

\begin{theorem}[TPE(2) property of RMM]\label{thm:TPE2-RMM}
	The proposed RMM scheme satisfies the TPE(2) property for any $\Delta t$ and $\Delta \tau$.
\end{theorem}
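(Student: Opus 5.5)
We argue by induction on the physical time level $n$, composing the TPE(2) properties of the individual operators already established. The inductive hypothesis at $t^n$ is that, on the mesh $\mathcal{T}_h^n$, the cell-integrated variables satisfy
\[
\mathbf V_{i,j}^n=\overline{\mathbf U}^n_{i,j}\,\widetilde{\mathcal M}_{0,0,i,j}^n=\sum_{s+r\le 2}\mathbf c^n_{s,r}\,\widetilde{\mathcal M}^n_{s,r,i,j},
\]
where $\mathbf p^n(x,y)=\sum_{s+r\le2}\mathbf c^n_{s,r}x^sy^r=\mathbf U_0(x-a_xt^n,y-a_yt^n)$. We also assume the EGMs agree with the exact moments, $\widetilde{\mathcal M}^n_{s,r,i,j}=\mathcal M_{s,r,i,j}(\mathcal T_h^n)$ for $s+r\le2$; this is the content of Theorem~\ref{thm:coincide}. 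At $n=0$ both statements hold by initialization on the Cartesian mesh, where EGMs are initialized with the exact moments.

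For the step $n\to n+1$ we treat the three operators in turn.

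\textbf{Physical evolution.} Because $\widetilde{\mathcal M}^n=\mathcal M$ on $\mathcal T_h^n$, the hypothesis of Proposition~\ref{prop:TPE_fixed} is met. That proposition shows the intermediate averages $\overline{\mathbf U}^{n+1,0}_{i,j}$ are exactly the cell averages on $\mathcal T_h^n$ of $\mathbf p^{n+1}(x,y)=\mathbf p^n(x-a_x\Delta t,y-a_y\Delta t)$. Expanding $\mathbf p^{n+1}$ binomially gives new coefficients $\mathbf c^{n+1}_{s,r}$, still of degree $\le 2$, with $\mathbf V^0_{i,j}=\sum\mathbf c^{n+1}_{s,r}\widetilde{\mathcal M}^0_{s,r,i,j}$. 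This holds for any $\Delta t$, since $\mathcal L^3$ annihilates quadratics.

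\textbf{Rezoning.} The operator $\mathcal Z_h$ only produces $\mathcal T_h^{n+1}$ and never modifies the data. The hybrid WENO indicator returns $\mathcal I_{i,j}=0$ for quadratic data, so the reconstruction is 2-exact throughout.

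\textbf{Remapping.} The linear relation above is exactly hypothesis \eqref{equ:taun_poly} at $\tau^0$. Proposition~\ref{prop:ssprk3_poly_exact} then propagates it through every SSPRK3 stage and every pseudo-time level, with the same coefficients $\mathbf c^{n+1}_{s,r}$, for any $\Delta\tau$ and any mesh motion satisfying Assumption~\ref{assump:non-twisting}. At the stages, 2-exactness of the reconstruction again comes from the indicator. Hence $\mathbf V^{N_\ell}_{i,j}=\sum\mathbf c^{n+1}_{s,r}\widetilde{\mathcal M}^{N_\ell}_{s,r,i,j}$ on $\mathcal T_h^{n+1}$.

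\textbf{Main obstacle.} The delicate point is the identification $\widetilde{\mathcal M}^{N_\ell}=\mathcal M(\mathcal T_h^{n+1})$. Without it, the result of the remap is exact only ``in the sense of EGM-based integration''. It would then fail to coincide with the true projection of the exact solution required by Definition~\ref{def:TPE}, and the hypothesis of Proposition~\ref{prop:TPE_fixed} at the next step would break.

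We invoke Theorem~\ref{thm:coincide}. Its underlying mechanism is as follows. For linearly moving vertices, the edge Gauss--Lobatto flux $\mathcal L^{\widetilde{\mathcal M}_{s,r}}(I(\tau))$ is exact for $s+r\le2$, since the integrand is polynomial of degree $\le3$ along the edge, and it is a polynomial in $\tau$ of degree $\le 3$. The SSPRK3 stage geometries sit at $\tau^\ell,\tau^\ell+\Delta\tau,\tau^\ell+\Delta\tau/2$. Because the flux depends only on geometry and not on the state, SSPRK3 reduces to the weights $\tfrac16,\tfrac16,\tfrac23$, i.e.\ Simpson's rule, which integrates cubics exactly. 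So each step reproduces $\mathcal M(\tau^{\ell+1})-\mathcal M(\tau^\ell)$ exactly, for any $\Delta\tau$.

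Combining the two facts, the remapped averages are the true cell averages of $\mathbf p^{n+1}$ on $\mathcal T_h^{n+1}$. The optional Step 5 is again a remap and is covered by the same argument. This closes the induction and yields TPE(2) for all $\Delta t$ and $\Delta\tau$.
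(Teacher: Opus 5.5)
Your proposal is correct and follows the same route as the paper: the paper's proof composes the exactness of $\mathcal{E}_h$ on the fixed grid (Proposition~\ref{prop:TPE_fixed}) with the quadratic preservation of $\mathcal{R}_h$ (Proposition~\ref{prop:ssprk3_poly_exact}) over each full time step, for any $\Delta t$ and $\Delta\tau$. Your induction on $n$ and your explicit appeal to Theorem~\ref{thm:coincide} make precise a link that the paper's short proof leaves implicit (the paper invokes that theorem only at the start of Section~\ref{sec:phys_evol}); the link is needed so that EGM-based exactness after remapping becomes exactness of the true cell averages, and so that the hypothesis of Proposition~\ref{prop:TPE_fixed} holds again at the next step.
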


\begin{proof}
	Consider the linear advection system \eqref{equ:linearAdv} with initial condition $\mathbf{U}_0 \in \mathbb{P}_2^{m}$. Since the reconstruction is 2-exact, the physical evolution operator $\mathcal{E}_h$ is exact on the fixed grid for any $\Delta t$ (by Proposition \ref{prop:TPE_fixed}), and the remapping operator $\mathcal{R}_h$ preserves quadratic polynomials for any $\Delta \tau$ (by Proposition \ref{prop:ssprk3_poly_exact}). Consequently, the composite update over a full time step introduces no discretization error for any $\mathbf{U}_0 \in \mathbb{P}_2^{m}$, thereby satisfying the TPE(2) definition regardless of $\Delta t$ and $\Delta \tau$.
\end{proof}

\subsection{Geometric consistency}
\label{subsec:remapping conservative}
During the pseudo-time advancement, discrepancies may accumulate between the evolved moments $\widetilde{\mathcal{M}}_{s,r,i,j}^{\ell+1}$ and the exact geometric moments $\mathcal M_{s,r,i,j}(\tau^{\ell+1})$. Such inconsistencies can prevent the reconstruction from faithfully matching the actual cell geometry, potentially leading to numerical instability and loss of accuracy. Although methods like geometric refreshing or source-term corrections \cite{VISBAL2002155,cai2025geometricperturbationrobustcutcellschemetwomaterial} can mitigate this, they compromise conservation. Fortunately, our discretization strictly guarantees $
\widetilde{\mathcal M}^\mathrm{\ell+1}_{s,r,i,j}
= \mathcal M_{s,r,i,j}(\tau^{\ell+1})$ for $s+r\leq2$,
thereby obviating the need for any post-processing. This exact identity is a remarkable consequence attributed to the superconvergence of the SSPRK3 method, as detailed in the following theorem. 



\begin{theorem}[Geometric consistency]\label{thm:coincide}
	Under the SSPRK3 method \eqref{equ:ssprk3}, the second-degree EGMs coincide with the exact geometric moments, i.e.,
	\begin{equation*}
		\widetilde{\mathcal{M}}^{\ell+1}_{s,r,i,j}
		= \mathcal M_{s,r,i,j}(\tau^{\ell+1}),
		\qquad \text{for } s+r\leq2.
	\end{equation*}
\end{theorem}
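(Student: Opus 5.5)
The plan is induction on $\ell$. The base case is $\widetilde{\mathcal M}^{0}=\mathcal M(0)$, which holds on the initial Cartesian mesh where the moments are computed exactly. For the inductive step we must show that one SSPRK3 step maps exact moments at $\tau^\ell$ to exact moments at $\tau^{\ell+1}$.

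The key observation is that the right-hand side $\mathcal{L}^{\widetilde{\mathcal M}_{s,r}}_{i,j}(I_{i,j}(\tau))$ in \eqref{equ:Mevo_semi} depends only on the geometry $I_{i,j}(\tau)$, not on the EGMs themselves. So the auxiliary system is a pure quadrature problem $\mathrm{d}\widetilde{\mathcal M}/\mathrm{d}\tau=f(\tau)$ with $f(\tau):=\mathcal{L}^{\widetilde{\mathcal M}_{s,r}}_{i,j}(I_{i,j}(\tau))$.

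\textbf{Step 1 (SSPRK3 reduces to Simpson's rule).} Unrolling \eqref{equ:ssprk3} with the stage geometries \eqref{equ:vertex_location_RK} gives
\[
\widetilde{\mathcal M}^{\ell+1}=\widetilde{\mathcal M}^{\ell}+\Delta\tau\Bigl[\tfrac16 f(\tau^\ell)+\tfrac16 f(\tau^\ell+\Delta\tau)+\tfrac23 f(\tau^\ell+\tfrac12\Delta\tau)\Bigr].
\]
This is exactly Simpson's rule for $\int_{\tau^\ell}^{\tau^{\ell+1}}f\,\mathrm d\tau$. Simpson's rule is exact for polynomials of degree $\le 3$ in $\tau$.

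\textbf{Step 2 ($f$ is the exact rate).} We check that $f(\tau)$ equals the true rate $\frac{\mathrm d}{\mathrm d\tau}\mathcal M_{s,r,i,j}(\tau)$ from \eqref{equ:high order gcl}. Each edge $l^k$ of $I_{i,j}(\tau)$ is a straight segment between two vertices. Parametrize it by $\theta\in[0,1]$ as $\mathbf x(\theta)=(1-\theta)\mathbf x_P+\theta\mathbf x_Q$. The velocity is linearly interpolated, $\mathbf w(\theta)=(1-\theta)\mathbf w_P+\theta\mathbf w_Q$, and this is consistent with the actual motion since the vertices move linearly and the edge stays straight. The quantity $|l^k|\,\mathbf n_k\,\mathrm dS$ equals the rotated tangent $(\mathbf x_Q-\mathbf x_P)^\perp\,\mathrm d\theta$, which is constant in $\theta$. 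The integrand $x^sy^r(\mathbf w\cdot\mathbf n)|l^k|$ is therefore a polynomial of degree $s+r+1\le 3$ in $\theta$. The three-point Gauss--Lobatto rule integrates it exactly. Hence $f(\tau)=\oint_{\partial I(\tau)}x^sy^r\,\mathbf w\cdot\mathbf n\,\mathrm dS$. By the Reynolds transport theorem, and because Assumption~\ref{assump:non-twisting} keeps the cell a valid convex quadrilateral so the boundary is well defined, this equals $\mathcal M_{s,r,i,j}'(\tau)$.

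\textbf{Step 3 (degree of $f$ in $\tau$).} This is the main obstacle. Write $f(\tau)=\sum_k\int_0^1 \mathbf x(\theta,\tau)^{(s,r)}\,\mathbf w(\theta)\cdot(\mathbf x_Q(\tau)-\mathbf x_P(\tau))^\perp\,\mathrm d\theta$. The factor $\mathbf x(\theta,\tau)$ is affine in $\tau$, $\mathbf w$ is independent of $\tau$, and the rotated tangent is affine in $\tau$. So $f$ is a polynomial in $\tau$ of degree at most $s+r+1\le3$.

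An alternative check uses the area: $\mathcal M_{s,r}(\tau)$ is a polynomial of degree $\le s+r+2$ in $\tau$, by Green's theorem on a polygon with affinely moving vertices. Its derivative then has degree $\le 3$, and the conclusion is the same. Simpson's exactness gives $\widetilde{\mathcal M}^{\ell+1}-\widetilde{\mathcal M}^\ell=\int_{\tau^\ell}^{\tau^{\ell+1}}\mathcal M'\,\mathrm d\tau=\mathcal M(\tau^{\ell+1})-\mathcal M(\tau^\ell)$, which closes the induction.

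The delicate points are the following:
\begin{itemize}
\item confirming that $|l^k|\mathbf n_k$ rather than $\mathbf n_k$ alone carries the $\tau$-dependence, so that the degree count is exactly 3, not higher, since Simpson's rule fails at degree 4 and the bound $s+r\le2$ is sharp;
\item confirming that the interior points $x_{k,g}$ are the midpoints of the moved edges at each stage time.
\end{itemize}
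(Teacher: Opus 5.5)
Your proposal is correct and follows the paper's proof essentially step for step: exactness of the three-point Gauss--Lobatto edge rule for the degree-$(s+r+1)$ integrand via the rotated-edge (scaled) normal, the bound $\deg_\tau \bigl(\mathrm{d}\mathcal M_{s,r,i,j}/\mathrm{d}\tau\bigr)\le s+r+1\le 3$, and the reduction of SSPRK3 to Simpson's rule. The only differences are that you make explicit the induction hypothesis $\widetilde{\mathcal M}^{\ell}_{s,r,i,j}=\mathcal M_{s,r,i,j}(\tau^\ell)$, which the paper uses tacitly, and a harmless notational slip: $|l^k|\,\mathbf n_k\,\mathrm dS$ should read $\mathbf n_k\,\mathrm dS=|l^k|\,\mathbf n_k\,\mathrm d\theta$.
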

The proof of this theorem is nontrivial and relies on three auxiliary lemmas. We first establish these lemmas in the following subsection, and then present the complete proof of the theorem in Section \ref{subsec:proof_consistency}.

\subsubsection{Auxiliary lemmas}\label{subsec:aux_lems}
The following lemma establishes the intrinsic polynomial structure of the exact geometric moments.
\begin{lemma}\label{lem:M_poly}
	For each moving cell $I_{i,j}(\tau)$, the exact geometric moment $\mathcal M_{s,r,i,j}(\tau)$ is a polynomial in $\tau$ of degree $s+r+2$.
\end{lemma}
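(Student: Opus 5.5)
We plan to write the exact moment as a pull-back to the reference square and track the polynomial degree in $\tau$ of each factor of the integrand. By \eqref{equ:vertex_location}, every vertex moves linearly in pseudo-time, $\mathbf{x}_P(\tau)=\mathbf{x}_P^n+\tau\mathbf{w}_P$. The bilinear map \eqref{equ:shape} of $I_{i,j}(\tau)$ has coefficients $a_q,b_q$, $q=0,\dots,3$, which are fixed linear combinations of the four vertex coordinates. For example,
\[
a_0=\tfrac14(x_A+x_B+x_C+x_D),
\]
and $a_1,a_2,a_3$ are the corresponding signed differences. Hence each $a_q(\tau)$ and $b_q(\tau)$ is affine in $\tau$. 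It follows that for fixed $(\xi,\eta)$ both $x_{i,j}(\xi,\eta;\tau)$ and $y_{i,j}(\xi,\eta;\tau)$ are polynomials of degree at most one in $\tau$, with coefficients polynomial in $(\xi,\eta)$.

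Next comes the Jacobian. Assumption~\ref{assump:non-twisting} states that the cell stays convex and counterclockwise for every $\tau$. The bilinear map of a convex quadrilateral is then orientation preserving, so $J_{i,j}>0$ on $I_{\mathrm{ref}}$ and $|J_{i,j}|=J_{i,j}$ requires no absolute value. Explicitly,
\[
J_{i,j}=(a_1+a_3\eta)(b_2+b_3\xi)-(a_2+a_3\xi)(b_1+b_3\eta),
\]
which is a product and difference of affine-in-$\tau$ factors. Therefore $J_{i,j}$ has degree at most two in $\tau$ and is polynomial in $(\xi,\eta)$.

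Substituting into \eqref{equ:shape_int}, the integrand $x^s y^r J$ is a polynomial in $\tau$ of degree at most $s+r+2$ whose coefficients are polynomials in $(\xi,\eta)$. Integrating over the fixed square $I_{\mathrm{ref}}$ commutes with the expansion in powers of $\tau$, so $\mathcal M_{s,r,i,j}(\tau)$ is a polynomial in $\tau$ of degree at most $s+r+2$. The statement should be read as ``degree at most $s+r+2$'', since the leading coefficient can vanish, for instance for a pure translation or a stationary cell.

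The key and most delicate step is removing the absolute value from the Jacobian. This is exactly where Assumption~\ref{assump:non-twisting} is used, and it must hold uniformly in $\tau$, not only at the endpoints. Without it, $|J|$ would be merely piecewise polynomial in $\tau$ and polynomiality would fail. As a cross-check, we can take a route that avoids $J$ entirely. By Green's theorem, $\mathcal M_{s,r}=\frac{1}{s+1}\oint x^{s+1}y^r\,\mathrm{d}y$ over the four straight edges, and parametrizing each edge linearly with endpoints affine in $\tau$ gives degree $(s+1)+r+1=s+r+2$ directly. Convexity and ordering are then needed only to guarantee that the boundary is a simple positively oriented curve.
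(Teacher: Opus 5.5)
Your proof is correct, but it follows a different route from the paper's. The paper never forms the Jacobian. It differentiates $\mathcal M_{s,r,i,j}$ in $\tau$ and uses the transport identity \eqref{equ:high order gcl} to turn the derivative into four edge integrals, each parametrized by $\mu\in[0,1]$. On each edge the velocity $\mathbf w(\mu)$ is independent of $\tau$, while $\mathbf x(\tau,\mu)$ and the scaled normal $\mathbf n^*(\tau)$ are affine in $\tau$. Hence $\mathrm d\mathcal M_{s,r,i,j}/\mathrm d\tau$ has degree at most $s+r+1$, and one integration in $\tau$ finishes the argument.

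You instead pull the area integral \eqref{equ:shape_int} back to $I_{\mathrm{ref}}$ and count $\tau$-degrees directly: one each for $x$ and $y$, two for $J_{i,j}$. Your version is more elementary and shows exactly where Assumption~\ref{assump:non-twisting} is used. Dropping $|\cdot|$ is legitimate because $J_{i,j}$ is affine in $(\xi,\eta)$ (the $\xi\eta$ terms cancel). At each corner it equals the signed cross product of the two edges meeting there, which has the right sign for a convex counterclockwise cell. The paper leaves this hypothesis implicit in its appeal to the transport theorem.

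What the paper's route buys is economy across the section. It produces the edge integrand $[x]^s[y]^r(\mathbf w\cdot\mathbf n^*)$, which Lemma~\ref{lem:LM_exact} reuses verbatim as a polynomial of degree $s+r+1$ in $\mu$ to get Gauss--Lobatto exactness. It also gives directly the degree bound on $\mathrm d\mathcal M_{s,r,i,j}/\mathrm d\tau$ that Simpson's rule needs in Theorem~\ref{thm:coincide}.

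Your Green's-theorem cross-check is essentially the paper's boundary argument, applied to $\mathcal M_{s,r,i,j}$ itself rather than to its derivative. As you observe, it shows that simplicity and positive orientation of the moving cell suffice, with convexity needed only for the pull-back version. Your reading of the statement as ``degree at most $s+r+2$'' agrees with the paper's own conclusion.
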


\begin{proof}
	Analyzing the temporal dependence of $\mathcal M_{s,r,i,j}(\tau)$ via direct integration is cumbersome. Instead, we differentiate it in pseudo-time and apply
	\eqref{equ:high order gcl} to convert the volume contribution into boundary integrals:
	\[
	\frac{\mathrm{d}\mathcal{M}_{s,r,i,j}}{\mathrm{d}\tau}
	= \int_{\partial I_{i,j}(\tau)} x^{s} y^{r}\,(\mathbf{w}\!\cdot\!\mathbf{n})\,\mathrm{d}S
	= \sum_{k=1}^4\int_{l_{i,j}^k(\tau)} x^{s} y^{r}\,(\mathbf{w}\!\cdot\!\mathbf{n})\,\mathrm{d}S,
	\]
	where $l_{i,j}^k(\tau)$ denotes the $k$th edge of the quadrilateral cell $I_{i,j}(\tau)$.
	
	Consider the edge $l_{i,j}^1(\tau)$ with endpoints $A(\tau)$ and $B(\tau)$, as illustrated in Figure~\ref{fig:moving_mesh} (right). Parameterize points on the segment
	$A(\tau)B(\tau)$ by
	\begin{equation}\label{equ:Mt_poly1}
		\mathbf x(\tau,\mu)
		\!=\! (1-\mu)\,\mathbf x_A(\tau)+\mu\,\mathbf x_B(\tau)
		\!=\! (1-\mu)\bigl(\mathbf x_A(0)+\mathbf w_A\tau\bigr)
		+ \mu\bigl(\mathbf x_B(0)+\mathbf w_B\tau\bigr),
		~~~ \mu\in[0,1].
	\end{equation}
	Differentiation with respect to $\tau$ yields the mesh velocity:
	\begin{equation}\label{equ:Mt_poly2}
		\mathbf w(\mu)
		= \frac{\partial\mathbf x(\tau,\mu)}{\partial \tau}
		= (1-\mu)\,\mathbf w_A+\mu\,\mathbf w_B,
		\qquad \mu\in[0,1],
	\end{equation}
	which is independent of $\tau$. Using the parameterization \eqref{equ:Mt_poly1}, the differential arc length transforms as $\mathrm{d}S = |\mathbf{x}_B(\tau) - \mathbf{x}_A(\tau)|\,\mathrm{d}\mu$. To simplify the derivation, we introduce the \textit{scaled normal} $\mathbf{n}^*(\tau)$, defined directly by rotating the edge vector:
	\begin{equation}\label{equ:Mt_poly4}
		\mathbf{n}^*(\tau)
		:= \mathbf{R}_{-\frac{\pi}{2}} \bigl(\mathbf{x}_B(\tau)-\mathbf{x}_A(\tau)\bigr)
		= \mathbf{R}_{-\frac{\pi}{2}} \bigl[\tau(\mathbf{w}_B-\mathbf{w}_A) + \bigl(\mathbf{x}_B(0)-\mathbf{x}_A(0)\bigr)\bigr],
	\end{equation}
	where $\mathbf{R}_{-\frac{\pi}{2}}\!=\!\bigl( \begin{smallmatrix} 0 & 1 \\ -1 & 0 \end{smallmatrix} \bigr)$.
	Noting that $\mathbf{n}(\tau)\,\mathrm{d}S \equiv \mathbf{n}^*(\tau)\,\mathrm{d}\mu$, the edge integral simplifies to
	\begin{equation}\label{equ:Mt_poly5}
		\int_{l_{i,j}^1(\tau)} x^{s} y^{r}\,(\mathbf{w}\cdot\mathbf{n})\,\mathrm{d}S
		= \int_0^1 [x(\tau,\mu)]^{s} [y(\tau,\mu)]^{r}\,
		\bigl(\mathbf{w}(\mu)\cdot\mathbf{n}^*(\tau)\bigr)\,\mathrm{d}\mu,
	\end{equation}
	where we have explicitly substituted the parameterized coordinates $\mathbf x(\tau,\mu)$ into the monomial basis $x^sy^r$.
	
	Since $\mathbf{x}(\tau,\mu)$ and $\mathbf{w}(\mu)\cdot\mathbf{n}^*{(\tau)}$ are affine in $\tau$ (see \eqref{equ:Mt_poly1}--\eqref{equ:Mt_poly4}), the integrand $[x(\tau,\mu)]^{s} [y(\tau,\mu)]^{r}\,
	\bigl(\mathbf{w}(\mu)\cdot\mathbf{n}^*(\tau)\bigr)$ is a polynomial in $\tau$ of degree at most $s+r+1$. This degree is preserved after integration over $\mu$. Extending this to all edges, $\frac{\mathrm{d}\mathcal{M}{s,r,i,j}}{\mathrm{d}\tau}$ also has degree $s+r+1$, which implies that
	$\mathcal M_{s,r,i,j}(\tau)$ is a polynomial of degree at most $s+r+2$.
\end{proof}

The following lemma establishes the exactness of the semi-discrete operator $\mathcal L^{\widetilde{\mathcal{M}}_{s,r}}_{i,j}$.

\begin{lemma}\label{lem:LM_exact}
	For $s+r\leq2$, the semi-discrete scheme \eqref{equ:Mevo_semi} satisfies
	\begin{equation}\label{equ:LM_exact}
		\mathcal L^{\widetilde{\mathcal M}_{s,r}}_{i,j}
		\bigl(I_{i,j}(\tau)\bigr)
		= \frac{\mathrm{d}\mathcal M_{s,r,i,j}(\tau)}{\mathrm{d}\tau},
	\end{equation}
	where $\mathcal M_{s,r,i,j}(\tau)$ is the exact geometric moment of the moving cell $I_{i,j}(\tau)$.
\end{lemma}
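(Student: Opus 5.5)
The plan is to reduce the statement to a one-dimensional quadrature-exactness check on each edge, reusing the parametrization from the proof of Lemma~\ref{lem:M_poly}. By \eqref{equ:high order gcl}, the exact derivative $\frac{\mathrm{d}\mathcal M_{s,r,i,j}}{\mathrm{d}\tau}$ is the sum over the four edges of $\int_{l^k_{i,j}(\tau)} x^s y^r(\mathbf w\cdot\mathbf n)\,\mathrm{d}S$. The discrete operator $\mathcal L^{\widetilde{\mathcal M}_{s,r}}_{i,j}$ in \eqref{equ:Mevo_semi} is the sum over the same four edges of a three-point Gauss--Lobatto approximation of that integral. It therefore suffices to show, edge by edge, that the Gauss--Lobatto sum $|l^k_{i,j}|\sum_g\omega_g x_{k,g}^s y_{k,g}^r(\mathbf w_{k,g}\cdot\mathbf n_k)$ equals the exact edge integral at every fixed $\tau$, for $s+r\le2$.

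Fix $\tau$ and an edge $A(\tau)B(\tau)$, and use the parametrization \eqref{equ:Mt_poly1} with $\mu\in[0,1]$. Along the edge the outward unit normal $\mathbf n$ is constant, and $\mathbf n\,\mathrm{d}S=\mathbf n^*(\tau)\,\mathrm{d}\mu$. By \eqref{equ:Mt_poly5} the edge integral becomes $\int_0^1 [x(\tau,\mu)]^s[y(\tau,\mu)]^r\,\bigl(\mathbf w(\mu)\cdot\mathbf n^*(\tau)\bigr)\,\mathrm{d}\mu$. Two facts are needed here:
\begin{itemize}
\item $\mathbf x(\tau,\mu)$ is affine in $\mu$, so $x^s y^r$ is a polynomial in $\mu$ of degree at most $s+r\le2$;
\item the mesh velocity at points of the edge is the linear interpolation $(1-\mu)\mathbf w_A+\mu\mathbf w_B$, as prescribed in Section~\ref{sec:compute_w} and consistent with \eqref{equ:Mt_poly2}, so $\mathbf w(\mu)\cdot\mathbf n^*(\tau)$ is affine in $\mu$.
\end{itemize}
The integrand is therefore a polynomial in $\mu$ of degree at most $3$.

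The three-point Gauss--Lobatto rule has nodes $\mu=0,\tfrac12,1$ and weights $\tfrac16,\tfrac23,\tfrac16$, which is Simpson's rule after the shift $\mu=\xi+\tfrac12$. It integrates cubics exactly on $[0,1]$; the paper already uses this fact \cite[Section 2.2.3]{canuto2006spectral}. Its quadrature points are the endpoints $A,B$ and the midpoint. Evaluated at these points, the sum $|l^k|\sum_g\omega_g(\cdots)(\mathbf w_{k,g}\cdot\mathbf n_k)$ coincides with $\sum_g\omega_g(\cdots)(\mathbf w(\mu_g)\cdot\mathbf n^*)$, because $|l^k|\mathbf n_k=\mathbf n^*$. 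The discrete edge contribution thus equals the exact edge integral. Summing over the four edges gives \eqref{equ:LM_exact} at every $\tau$. In particular it holds at the stage geometries $I_{i,j}(\tau^\ell)$, $I_{i,j}(\tau^\ell+\Delta\tau)$ and $I_{i,j}(\tau^\ell+\tfrac12\Delta\tau)$, since those are exactly the configurations $I_{i,j}(\tau)$ from \eqref{equ:vertex_location}.

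The step requiring the most care is the consistency check in the last paragraph: that the quadrature points $\mathbf x_{k,g}$, velocities $\mathbf w_{k,g}$ and normal/length factors used in \eqref{equ:Mevo_semi} really are the values of the parametrized quantities at $\mu_g$. This includes confirming that the edges remain straight segments with a single constant normal, which follows from the quadrilateral geometry and Assumption~\ref{assump:non-twisting}. With the degree count bounded by $3$ and Simpson's rule exact for cubics, nothing else remains. The bound is sharp: for $s+r=3$ the integrand would be quartic and exactness would fail.
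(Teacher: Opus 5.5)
Your proposal is correct and takes essentially the same route as the paper. You reduce to edge-wise exactness and use the parametrization from Lemma~\ref{lem:M_poly}, which is affine in $\mu$, so that the integrand $[x(\tau,\mu)]^s[y(\tau,\mu)]^r\,(\mathbf w(\mu)\cdot\mathbf n^*(\tau))$ has degree at most $s+r+1\le 3$; you then invoke exactness of the three-point Gauss--Lobatto (Simpson) rule for cubics and sum over the four edges. Your explicit checks that $|l^k_{i,j}|\,\mathbf n_k=\mathbf n^*(\tau)$ and that the quadrature nodes are $\mu_g\in\{0,\tfrac12,1\}$ with weights $\tfrac16,\tfrac23,\tfrac16$ make precise a step the paper leaves implicit.
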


\begin{proof}
	From \eqref{equ:Mevo_semi} we have
	\[
	\mathcal L^{\widetilde{\mathcal{M}}_{s,r}}_{i,j}
	\bigl(I_{i,j}(\tau)\bigr)
	= \sum_{k=1}^4 |l_{i,j}^k(\tau)| \sum_{g=1}^3
	\omega_g\, x_{k,g}^{\,s} y_{k,g}^{\,r}\,
	(\mathbf{w}_{k,g}\!\cdot\!\mathbf{n}_k).
	\]
	To establish \eqref{equ:LM_exact}, it suffices to verify that the three-point Gauss--Lobatto quadrature rule is exact for each edge $k=1,\dots,4$. That is, we need to show
	\begin{equation}\label{equ:threeGL_exact}
		|l_{i,j}^k(\tau)| \sum_{g=1}^3
		\omega_g\, x_{k,g}^{\,s} y_{k,g}^{\,r}\,
		(\mathbf{w}_{k,g}\!\cdot\!\mathbf{n}_k)=\int_{l_{i,j}^k(\tau)} x^{s} y^{r}\,(\mathbf{w}\!\cdot\!\mathbf{n})\,\mathrm{d}S.
	\end{equation}
	Consider the first edge $l_{i,j}^1(\tau)$. We also adopt the parameterization strategy from Lemma~\ref{lem:M_poly}. Since $\mathbf x(\tau,\mu)$ and $\mathbf{w}\cdot\mathbf n^*$ are affine in $\mu$ by \eqref{equ:Mt_poly1}--\eqref{equ:Mt_poly4}, the integrand $[x(\tau,\mu)]^{s} [y(\tau,\mu)]^{r}\,
	\bigl(\mathbf{w}(\mu)\cdot\mathbf{n}^*(\tau)\bigr)$ is a polynomial in $\mu$ of degree $s+r+1$. For $s+r\leq 2$, this degree is at most three. Since the three-point Gauss--Lobatto quadrature rule integrates cubic polynomials exactly \cite[Section 2.2.3]{canuto2006spectral}, equation \eqref{equ:threeGL_exact} holds for $k=1$. By symmetry, the same argument applies to the remaining edges. Summing the contributions from all four edges yields \eqref{equ:LM_exact}, completing the proof.
\end{proof}

The following lemma establishes that for pure time-dependent problems, the SSPRK3 method reduces to Simpson's rule.
\begin{lemma}[Reduction to Simpson's rule]\label{lem:SSPRK3_simpson}
	Consider the purely time-dependent problem $u_\tau = f(\tau)$. The SSPRK3 method \eqref{equ:ssprk3}, with stage evaluations corresponding to $\tau^n$, $\tau^n + \Delta \tau$, and $\tau^n + \frac{\Delta \tau}{2}$, yields the update
	\begin{equation*}
		u^{n+1}-u^n
		= \frac{\Delta \tau}{6}\left[
		f(\tau^n)
		+ 4 f\!\left(\tau^n + \frac{\Delta \tau}{2}\right)
		+ f(\tau^n + \Delta \tau)
		\right].
	\end{equation*}
	This formulation coincides exactly with Simpson's rule over the interval $[\tau^n,\tau^{n+1}]$.
\end{lemma}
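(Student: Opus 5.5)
The plan is to substitute the right-hand side $f$ directly into the three stages of \eqref{equ:ssprk3}, using the fact that for $u_\tau=f(\tau)$ the operator $\mathcal L$ depends only on the stage time, not on the stage value. Let $f_0=f(\tau^n)$, $f_1=f(\tau^n+\Delta\tau)$ and $f_{1/2}=f(\tau^n+\tfrac12\Delta\tau)$. Following \eqref{equ:vertex_location_RK}, the first, second and third stages evaluate $f$ at $\tau^n$, $\tau^n+\Delta\tau$ and $\tau^n+\tfrac12\Delta\tau$, respectively.

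Step 1 is the first stage: $u^{(1)}=u^n+\Delta\tau f_0$. Step 2 substitutes this into the second stage:
\[
u^{(2)}=\tfrac34u^n+\tfrac14\bigl(u^n+\Delta\tau f_0+\Delta\tau f_1\bigr)=u^n+\tfrac{\Delta\tau}{4}(f_0+f_1).
\]
Step 3 substitutes $u^{(2)}$ into the final stage:
\[
u^{n+1}=\tfrac13u^n+\tfrac23\Bigl(u^n+\tfrac{\Delta\tau}{4}(f_0+f_1)+\Delta\tau f_{1/2}\Bigr)=u^n+\tfrac{\Delta\tau}{6}\bigl(f_0+4f_{1/2}+f_1\bigr).
\]
This is Simpson's rule on $[\tau^n,\tau^{n+1}]$.

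The computation itself is elementary. The only point that needs care is keeping the stage times straight: the second stage uses the geometry at $\tau^n+\Delta\tau$ and the third at $\tau^n+\tfrac12\Delta\tau$. The stage values $u^{(1)}$ and $u^{(2)}$ do not feed back into $f$, so the stages decouple and the weights combine linearly into the Simpson weights $\tfrac16,\tfrac46,\tfrac16$.
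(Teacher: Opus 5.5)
Your proposal is correct and follows the paper's proof: substitute the stage evaluations $f(\tau^n)$, $f(\tau^n+\Delta\tau)$, $f(\tau^n+\tfrac12\Delta\tau)$ into the three SSPRK3 stages and collect terms. Your explicit intermediate result $u^{(2)}=u^n+\tfrac{\Delta\tau}{4}(f_0+f_1)$ spells out the ``simple algebraic rearrangement'' that the paper leaves implicit.
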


\begin{proof}
	For the scalar ODE $u_\tau = f(\tau)$, the spatial operator $\mathcal L$ reduces to the source term $f(\tau)$. Applying the SSPRK3 method \eqref{equ:ssprk3} by evaluating this term at $\tau^n$, $\tau^n + \Delta \tau$, and $\tau^n + \frac{\Delta \tau}{2}$ for the three respective stages leads to:
	\begin{align*}
		u^{(1)}
		&= u^n + \Delta \tau\, f(\tau^n), \\
		u^{(2)}
		&= \frac{3}{4}u^n
		+ \frac{1}{4}\Bigl(u^{(1)} + \Delta \tau\, f(\tau^n+\Delta \tau)\Bigr), \\
		u^{n+1}
		&= \frac{1}{3}u^n
		+ \frac{2}{3}\Bigl(u^{(2)} + \Delta \tau\, f\bigl(\tau^n+\tfrac{\Delta \tau}{2}\bigr)\Bigr). 
	\end{align*}
	Simple algebraic rearrangement gives
	\begin{equation}
		u^{n+1} - u^n= \frac{\Delta \tau}{6}\left[
		f(\tau^n)
		+ 4 f\!\left(\tau^n + \tfrac{\Delta \tau}{2}\right)
		+ f(\tau^n + \Delta \tau)
		\right].
	\end{equation}
	This proves the claim.
\end{proof}

Lemma~\ref{lem:M_poly} establishes that $\mathcal{M}_{s,r,i,j}$ are quartic polynomials in time for $s+r=2$. Although SSPRK3 is nominally third-order, its reduction to Simpson's rule elevates its effective precision to fourth order, enabling the exact evolution of geometric moments for all $s+r \le 2$.

\begin{remark}
	Notably, the ten-stage fourth-order SSPRK method \cite{ketcheson2008highly} does not exhibit this superconvergence. Despite its higher nominal order and cost, it achieves the same geometric exactness ($s+r \le 2$) as the more efficient SSPRK3 method.
\end{remark}

\subsubsection{Proof of Theorem \ref{thm:coincide}}\label{subsec:proof_consistency}

\begin{proof}
	Lemma~\ref{lem:M_poly} establishes that the exact moment $\mathcal M_{s,r,i,j}(\tau)$ is a polynomial in $\tau$ of degree $s+r+2$. Lemma~\ref{lem:LM_exact} confirms that, for $s+r\leq2$, the semi-discrete scheme \eqref{equ:Mevo_semi} yields the exact time derivative:
	\begin{equation*}
		\mathcal L^{\widetilde{\mathcal{M}}_{s,r}}_{i,j}\bigl(I_{i,j}(\tau)\bigr)
		= \frac{\mathrm{d}\mathcal M_{s,r,i,j}(\tau)}{\mathrm{d}\tau}.
	\end{equation*}
	Furthermore, the mesh-update equation~\eqref{equ:vertex_location_RK} ensures that the SSPRK3 stages evaluate the cell geometry $I_{i,j}$ at the pseudo-time $\tau^\ell$, $\tau^\ell+\Delta \tau$, and $\tau^\ell+\frac{\Delta \tau}{2}$. Consequently, for $s+r\leq2$, we have
	\begin{equation}\label{equ:SSPRK3_stage_direv}
		\left\{
		\begin{alignedat}{2}
			\mathcal L^{\widetilde{\mathcal{M}}_{s,r}}_{i,j} \bigl(I_{i,j}^{\ell}\bigr)
			& =\mathcal L^{\widetilde{\mathcal{M}}_{s,r}}_{i,j} \bigl(I_{i,j}(\tau^\ell)\bigr)
			& & = \frac{\mathrm{d}\mathcal M_{s,r,i,j}(\tau)}{\mathrm{d}\tau} \bigg|_{\tau=\tau^\ell}, \\
			\mathcal L^{\widetilde{\mathcal{M}}_{s,r}}_{i,j} \bigl(I_{i,j}^{(1)}\bigr)
			& =\mathcal L^{\widetilde{\mathcal{M}}_{s,r}}_{i,j} \bigl(I_{i,j}(\tau^\ell+\Delta \tau)\bigr)
			& & = \frac{\mathrm{d}\mathcal M_{s,r,i,j}(\tau)}{\mathrm{d}\tau} \bigg|_{\tau=\tau^{\ell}+\Delta\tau}, \\
			\mathcal L^{\widetilde{\mathcal{M}}_{s,r}}_{i,j} \bigl(I_{i,j}^{(2)}\bigr)
			& =\mathcal L^{\widetilde{\mathcal{M}}_{s,r}}_{i,j} \bigl(I_{i,j}(\tau^\ell+\tfrac{\Delta \tau}{2})\bigr)
			& & = \frac{\mathrm{d}\mathcal M_{s,r,i,j}(\tau)}{\mathrm{d}\tau} \bigg|_{\tau=\tau^\ell+\frac{\Delta \tau}{2}}.
		\end{alignedat}
		\right.
	\end{equation}

	Finally, Lemma~\ref{lem:SSPRK3_simpson} shows that the SSPRK3 method reduces to Simpson's rule for temporal integration. This implies \textbf{fourth-order superconvergence}, as Simpson's rule is exact for cubic polynomials. Since $\partial_\tau\mathcal M_{s,r,i,j}(\tau)$ is a polynomial of degree $(s+r+2)-1 \le 3$ for $s+r \le 2$ (Lemma~\ref{lem:M_poly}), we conclude that $\widetilde{\mathcal{M}}^{\ell+1}_{s,r,i,j} = \mathcal M_{s,r,i,j}(\tau^{\ell+1})$.
\end{proof}

\subsection{Estimation of pseudo-time levels $N_\ell$ in $\mathcal{R}_h$}\label{sec:remap_Nell}

As discussed in Section \ref{subsec:o1/h}, conventional methods \cite{lipnikov2019high,lipnikov2020conservative,gu2023high} typically require $N_\ell = O(h^{-1})$ under bounded but discontinuous mesh velocity. In contrast, our remapping operator $\mathcal R_h$ satisfies the TPE(2) property independently of the mesh motion and $\Delta \tau$ (Proposition \ref{prop:ssprk3_poly_exact}). This ensures that accuracy is maintained regardless of temporal resolution, leaving $\Delta \tau$ constrained solely by the stability requirement:
\begin{equation}\label{equ:cfl_practical}
	\Delta \tau \;\leq\;
	C_{\mathrm{CFL}}\,
	\min_{i,j}\frac{{\mathcal M}_{0,0,i,j}}
	{\max_{k=1}^{4} \left( a^{\max}_{i,j,k}\,|l_{i,j}^k| \right)}.
\end{equation}
where $a^{\max}_{i,j,k}=\max_{g}\bigl|\mathbf w_{k,g}(I_{i,j})\!\cdot\!\mathbf n_k(I_{i,j})\bigr|$ and $C_{\mathrm{CFL}}>0$ is the CFL number. 

To estimate $N_\ell$, we assume uniform pseudo-time levels over the interval $[0,\tau^{\mathrm{final}}]$ with $\Delta\tau = \tau^{\mathrm{final}}/N_\ell$. Let $\delta\mathbf x_P=\mathbf x^{n+1}_P-\mathbf x^n_P$ for $P \in \{A, B, C, D\}$ denote the displacements of the four vertices of cell $I_{i,j}=ABCD$ during remapping. Since the mesh velocity along each edge is determined via linear interpolation of the endpoint velocities, its maximum magnitude is attained at the vertices. Combining this property with the mesh-velocity definition in~\eqref{equ:grid_vel}, we obtain the bound $|\mathbf{w}| \leq \max_P |\delta\mathbf{x}_{P}|/\tau^{\mathrm{final}}$. Substituting this inequality and the definition of $a^{\max}_{i,j,k}$ into the CFL condition~\eqref{equ:cfl_practical} yields the following sufficient condition for $N_\ell$:
\begin{equation}\label{equ:N_bound}
	N_\ell \;\geq\;
	\max_{i,j}\frac{\max_{P}|\delta \mathbf x_{P}|\,\max_{k=1}^{4}|l_{i,j}^k|}
	{C_{\mathrm{CFL}}\,{\mathcal M}_{0,0,i,j}}.
\end{equation}
Given standard geometric scalings where $\Delta t = \mathcal{O}(h)$, $|l_{i,j}^k| = \mathcal{O}(h)$, and ${\mathcal M}_{0,0,i,j} = \mathcal{O}(h^2)$, and assuming a bounded physical-time mesh velocity satisfying $|\mathbf{w}^{\text{phys}}|=|\delta \mathbf{x}_{P}/\Delta t| \lesssim 1$, the estimate \eqref{equ:N_bound} implies $N_\ell = \mathcal{O}(1)$. Consequently, even in the presence of discontinuous mesh velocity, only a \emph{constant number of pseudo-time levels} is required to maintain both stability and high-order accuracy, thereby reducing the remapping levels per physical step from $\mathcal{O}(h^{-1})$ to $\mathcal{O}(1)$. This claim is further substantiated by the numerical tests in Section~\ref{sec:numerical_tests}.

\begin{remark}\label{remark:tautarget_arbitrary}
	In the semi-discrete schemes \eqref{equ:Mevo_semi}--\eqref{equ:V_semi}, the numerical updates depend solely on the product $\mathbf{w}\Delta \tau$. Since the CFL condition \eqref{equ:cfl_practical} scales $\Delta \tau$ inversely proportional to $\mathbf{w}$, this product remains invariant with respect to the scaling parameter $\tau^{\mathrm{final}}$, rendering the numerical solution independent of the choice of $\tau^{\mathrm{final}}$. We therefore set $\tau^\mathrm{final} = \Delta t$ for simplicity.
\end{remark}

\begin{remark}
	Satisfying \eqref{equ:cfl_practical} with $C_{\mathrm{CFL}}\!\le\!\frac14$ implies the positivity of the evolved cell volume $\widetilde{\mathcal{M}}_{0,0,i,j}$. This follows directly from the semi-discrete scheme \eqref{equ:Mevo_semi}.
\end{remark}

\section{Numerical experiments}\label{sec:numerical_tests}
This section evaluates the performance of the proposed TPE(2) RMM scheme (hereafter the \textbf{TPE(2) scheme}) via carefully designed numerical experiments. The primary objectives are to verify: (i) the TPE(2) property; (ii) accuracy and efficiency under discontinuous mesh velocity; and (iii) the non-oscillatory shock-capturing capability. All experiments are implemented in C++ using double-precision arithmetic and are executed on a Linux server equipped with an Intel\textsuperscript{\textregistered} Xeon\textsuperscript{\textregistered} Gold 6348 CPU at 2.60\,GHz.

To isolate the benefits of the TPE(2) scheme, we compare it with two closely related variants that differ \emph{only} in the geometric treatment during the remapping step:
\begin{enumerate}
	\item \textbf{GCL scheme.} In the WENO reconstruction, we use the evolved cell volume $\widetilde{\mathcal M}_{0,0,i,j}$ and compute the exact integrals of the basis functions $\psi_{s,r}(x,y)$ (defined in \eqref{eq:norm_basis}) for $s+r>0$. This ensures the classical GCL.
	
	\item \textbf{NonGCL scheme.} All geometric moments $\mathcal{M}_{s,r}$ are evaluated exactly through \eqref{equ:shape_discre_int}, in alignment with \cite{gu2023high}.
\end{enumerate} 

Furthermore, to assess robustness under discontinuous mesh velocity, we compare two pseudo-time stepping strategies:

%

\begin{enumerate}
	\item \textbf{Resolution-dependent levels (rl).} 
	This strategy calculates $\tau^{\mathrm{final}}$ and $\Delta \tau$ following \cite{gu2023high}, thereby enforcing Lipschitz continuity of the pseudo-time mesh velocity. When the underlying physical-time mesh velocity is bounded yet discontinuous, the required number of pseudo-time levels is $N_\ell = \mathcal{O}(h^{-1})$.
	
	\item \textbf{Order-one levels (ol).} 
	By setting $\tau^{\mathrm{final}}=\Delta t$ and determining $\Delta \tau$ via the CFL constraint~\eqref{equ:cfl_practical}, this strategy only requires $N_\ell = \mathcal{O}(1)$ for bounded and discontinuous physical-time mesh velocity.
\end{enumerate}

Unless stated otherwise, simulations are initialized on a uniform $N_x\times N_y$ Cartesian grid with spacing $h_x\times h_y$. We employ two rezoning strategies:
\begin{enumerate}
	\item \textbf{Random rezoning.} Vertices are perturbed from their initial uniform positions via:
	\[
	\mathbf{x}^n = \mathbf{x}^0 + C_r\,(U_1 h_x,\; U_2 h_y)^\top + \mathbf b t,
	\]
	where $\mathbf b \in \mathbb R^2$ is a constant vector and $U_1$, $U_2$ are independent random variables uniformly distributed on $(-0.5, 0.5)$, yielding a bounded yet discontinuous physical-time mesh velocity. A fixed random seed (\texttt{std::mt19937 gen(123456789u)}) is used to ensure reproducibility. The mesh satisfies the regularity condition in Assumption~\ref{assump:non-twisting} provided $|C_r| \leq 0.5$.
	
	\item \textbf{ALE rezoning.} The mesh is evolved by combining a Lagrangian step with a subsequent mesh optimization procedure~\cite{cheng5472825fifth}.
\end{enumerate}

\begin{figure}[!tbhp]
	\centering
	\captionsetup[subfigure]{labelformat=empty}
	\includegraphics[width=0.95\textwidth]{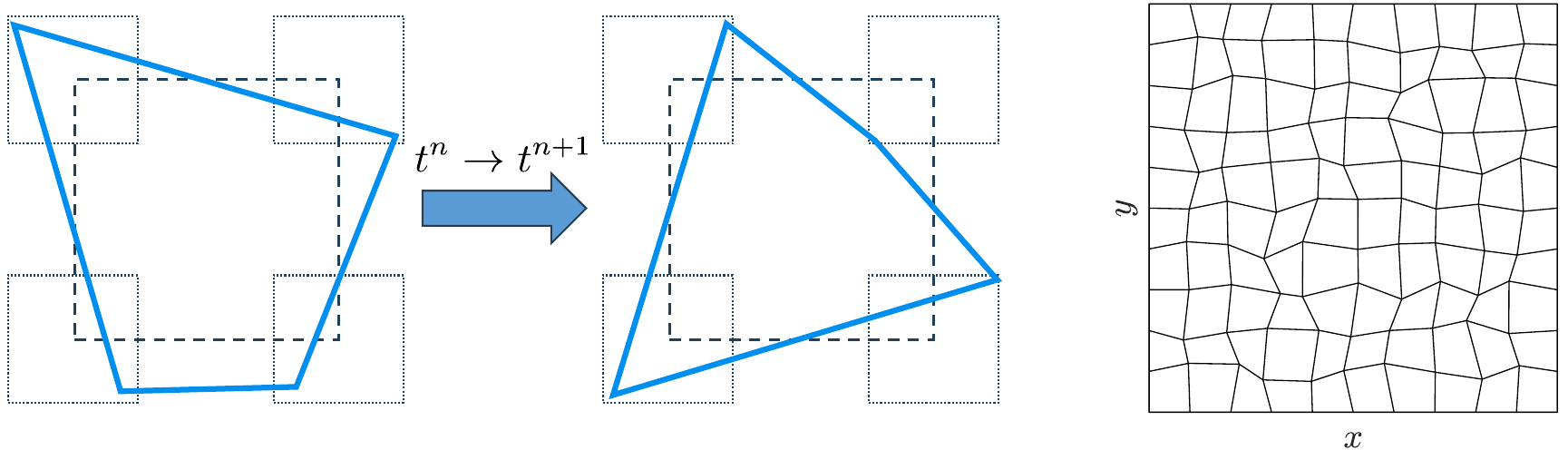}
	\captionsetup{font=small}
	\caption{Mesh configuration for the random rezoning with $|C_r|=0.5$. The mesh undergoes large-scale random perturbations at every physical-time step, resulting in severe distortion and occasional near-triangular cells.}
	\label{fig:distor}
\end{figure}

\subsection{Transport polynomial exactness tests}\label{subsec:tpe_tests}

To verify transport polynomial exactness, we solve the linear advection equation $u_t + u_x + u_y = 0$ on $\Omega=[0,1]^2$ using four schemes: TPE(2)-ol, GCL-ol, NonGCL-ol, and NonGCL-rl. Simulations are conducted on a $40\times40$ mesh, with exact boundary conditions imposed at each stage of the SSPRK3 method \cite{Carpenter1995}. We employ a random rezoning strategy with $C_r=0.5$ and $\mathbf b=(-0.6,-0.8)^{\top}$, which induces discontinuous mesh velocities and severe mesh deformations, as shown in Figure \ref{fig:distor}.

To ensure generality, we test three sets of randomized initial conditions:

\subsubsection{Quadratic initial condition (TPE(2)) tests}\label{subsubsec:tpe2_test}
$u_0(x,y)=\hat U_1+\hat U_2 x+\hat U_3 y+\hat U_4 x^2+\hat U_5 xy+\hat U_6 y^2.$

\subsubsection{Linear initial condition (TPE(1)) tests}\label{subsubsec:tpe1_test}
$u_0(x,y)=\hat U_1+\hat U_2 x+\hat U_3 y.$

\subsubsection{Free-stream preservation (TPE(0)) tests}\label{subsubsec:tpe0_test}
$u_0(x,y)=\hat U_1.$

The coefficients $\hat U_1, \dots, \hat U_6$ are independent random variables uniformly distributed in $(-5, 5)$, generated using a fixed seed (\texttt{std::mt19937 gen(202688888888u)}). This ensures that all schemes are evaluated on identical problem instances.

Table~\ref{tab:t1-1} summarizes the maximum $L^1$ and $L^\infty$ errors at $T=0.1$ and the average pseudo-time levels over the 20 runs for each test. Notably, the TPE(2)-ol scheme achieves machine precision across all $3\times20$ tests. In contrast, the GCL-ol scheme maintains exactness only for the free-stream tests, exhibiting noticeable errors in the quadratic and linear tests. The NonGCL schemes incur substantially larger deviations in all tests. These results confirm that while the classical GCL suffices for free-stream preservation, it is insufficient to guarantee TPE($k$) for $k \ge 1$, underscoring the critical role of EGMs in achieving the general TPE($k$) property.

\begin{table}[!t]
	\centering
	\captionsetup{font=small}
	\caption{Transport polynomial exactness tests \ref{subsubsec:tpe2_test}--\ref{subsubsec:tpe0_test}. Maximum errors and average pseudo-time levels $\overline{N_\ell}$ for solving \(u_t+u_x+u_y=0\) with $3\times20$ randomized polynomial initial values. \(T=0.1\), \(N_x\times N_y=40\times40\).}
	
	\begingroup
	\setlength{\tabcolsep}{2.6666pt} 
	\renewcommand{\arraystretch}{1.2} 
	\centering
	\footnotesize
	\label{tab:t1-1}
	\begin{tabular}{ccccccc} 
		\bottomrule[1.0pt]
		&& & TPE(2)-ol & GCL-ol & NonGCL-ol & NonGCL-rl\\
		\hline
		&\multirow{3}{*}{\shortstack{Quadratic tests\\($u_0 \in \mathbb{P}_2^1$)}}&$L^{1}$ error&7.13E-15&1.88E-04&1.37E-02&1.11E-02\\
		&&$L^{\infty}$ error&4.80E-14&3.73E-03&1.48E-01&1.21E-01\\
		&&$\overline{N_\ell}$&1.9&1.9&1.9&2.1\\
		\hline
		&\multirow{3}{*}{\shortstack{Linear tests\\($u_0 \in \mathbb{P}_1^1$)}}&$L^{1}$ error&6.30E-15&1.06E-04&1.10E-02&8.82E-03\\
		&&$L^{\infty}$ error&2.40E-14&1.08E-03&6.30E-02&6.21E-02\\
		&&$\overline{N_\ell}$&1.9&1.9&1.9&2.1\\
		\hline
		&\multirow{3}{*}{\shortstack{Free-stream tests\\($u_0 \in \mathbb{P}_0^1$)}}&$L^{1}$ error&4.93E-15&5.00E-15&5.85E-03&4.80E-03\\
		&&$L^{\infty}$ error&1.20E-14&1.11E-14&6.54E-02&4.78E-02\\
		&&$\overline{N_\ell}$&1.9&1.9&1.9&2.1\\
		\toprule[1.0pt]
	\end{tabular}
	\endgroup
\end{table}

\subsection{Accuracy test for the compressible Euler equations}\label{subsec:euler_random_sine}
We assess the accuracy and efficiency of the TPE(2)-ol scheme using the 2D compressible Euler equations \cite{toro2013riemann} with smooth thermodynamic profiles under \textbf{discontinuous mesh velocity}. 
The vector of conserved variables $\mathbf U$ is uniquely determined by the primitive variables $(\rho, v_x, v_y, p)$ and the adiabatic index $\gamma$. In this problem, the initial condition is given by
\[
(\rho, v_x, v_y, p, \gamma) = \bigl(1+0.2\sin(2\pi(x+y)),\; 1,\; 1,\; 1,\;1.4\bigr).
\]
The computational domain is defined as $\Omega=[0,1]^2$. Periodic boundary conditions are imposed on $\partial\Omega$, and the simulation is conducted up to $T=0.1$. The interior mesh is rezoned using the random strategy with $C_r=0.5$ and $\mathbf{b}=(-0.6,-0.8)$, while boundary vertices move with the constant velocity $\mathbf{b}$.

Table~\ref{tab:t2-1} summarizes the convergence errors, average pseudo-time levels $\overline{N_\ell}$, and the maximum mesh-velocity Lipschitz constant $L_{\mathbf{w}}^{\max}$. Here, $L_{\mathbf{w}}^{\max}$ is calculated as the global maximum $L^\infty$-norm of $\nabla\mathbf{w}$, evaluated using the discretization in~\cite[Section 2.2]{gu2023high}. The results show that under the ``ol'' strategy, $L_{\mathbf{w}}^{\max}$ scales as $\mathcal{O}(h^{-1})$, consistent with the presence of discontinuous mesh velocity, whereas the ``rl'' strategy~\cite{gu2023high} effectively bounds the Lipschitz constant. 
Remarkably, despite the unbounded growth of $L_{\mathbf{w}}^{\max}$ in the ``ol'' case, the TPE(2)-ol scheme achieves the lowest error magnitudes and maintains stable third-order convergence. 
Conversely, the GCL-ol scheme exhibits severe order reduction, and the NonGCL-ol scheme fails to converge. 
Although the ``rl'' strategy enables the NonGCL-rl scheme to recover third-order accuracy, it remains less accurate than the TPE(2)-ol scheme and incurs a substantial computational penalty. 
Specifically, on the $320\times320$ mesh, it requires $\overline{N_\ell}=19.7$---nearly a tenfold increase compared to the ``ol'' strategy---rendering its remapping step an order of magnitude slower than that of the TPE(2)-ol scheme.

Table~\ref{tab:t2-2} further reveals that even with the same ``ol'' strategy, the TPE(2)-ol and GCL-ol schemes are nearly three times faster than the NonGCL-ol scheme. This efficiency gap stems primarily from the geometric mismatches in the NonGCL scheme, which induce spurious oscillations. These oscillations trigger the KXRCF indicator to erroneously flag smooth regions as discontinuities, forcing the hybrid WENO reconstruction to employ the expensive non-oscillatory stencils. By ensuring geometric compatibility, the use of EGMs prevents such false positives, thereby preserving the intrinsic efficiency of the hybrid strategy \cite{li2010hybrid}. 

\begin{table}[!t]
	\centering
	\captionsetup{font=small}
	\caption{Random-mesh accuracy test \ref{subsec:euler_random_sine}. Errors of $\rho$, average pseudo-time levels $\overline{N_\ell}$, and maximum mesh-velocity Lipschitz constants $L_{\mathbf{w}}^{\max}$ of different schemes. Final time $T=0.1$.}
	
	\begingroup
	\setlength{\tabcolsep}{2.6666pt} 
	\renewcommand{\arraystretch}{1.2} 
	\centering
	\footnotesize
	\label{tab:t2-1}
	\begin{tabular}{ccccccccc} 
		\bottomrule[1.0pt]
		& $N_{x}\times N_{y}$& $80\times 80$ & $120\times 120$& $160\times 160$& $200\times 200$&$240\times 240$ & $280\times 280$ & $320\times 320$\\
		\hline
		\multirow{6}{*}{TPE(2)-ol}&$L^{1}$ error&1.11E-04&3.35E-05&1.42E-05&7.35E-06&4.26E-06&2.70E-06&1.82E-06\\
		&Order&---&2.93&2.95&2.97&2.98&2.95&2.97\\
		&$L^{\infty}$ error&1.84E-04&5.69E-05&2.43E-05&1.25E-05&7.68E-06&5.37E-06&3.27E-06\\
		&Order&---&2.90&2.95&2.99&2.67&2.32&3.73\\
		&$\overline{N_\ell}$&2.0&2.0&2.0&2.0&2.0&2.0&2.0\\
		&$L_{\mathbf{w}}^{\max}$&1874.9&2849.1&3900.7&4998.5&5991.7&7318.3&8443.5\\
		\hline
		\multirow{6}{*}{GCL-ol}&$L^{1}$ error&1.12E-04&3.45E-05&1.53E-05&8.51E-06&5.66E-06&3.78E-06&2.79E-06\\
		&Order&---&2.91&2.83&2.63&2.24&2.62&2.25\\
		&$L^{\infty}$ error&2.02E-04&6.56E-05&4.83E-05&4.84E-05&5.35E-05&3.71E-05&3.04E-05\\
		&Order&---&2.78&1.06&-0.00&-0.55&2.37&1.49\\
		&$\overline{N_\ell}$&2.0&2.0&2.0&2.0&2.0&2.0&2.0\\
		&$L_{\mathbf{w}}^{\max}$&1874.9&2849.1&3900.7&4998.5&5991.7&7318.3&8443.5\\
		\hline
		\multirow{6}{*}{NonGCL-ol}&$L^{1}$ error&1.72E-03&1.72E-03&1.59E-03&1.59E-03&1.62E-03&1.53E-03&1.51E-03\\
		&Order&---&0.00&0.26&0.02&-0.11&0.36&0.13\\
		&$L^{\infty}$ error&1.83E-02&1.68E-02&1.68E-02&2.24E-02&1.86E-02&2.05E-02&1.94E-02\\
		&Order&---&0.21&-0.01&-1.28&1.02&-0.64&0.44\\
		&$\overline{N_\ell}$&2.0&2.0&2.0&2.0&2.0&2.0&2.0\\
		&$L_{\mathbf{w}}^{\max}$&1873.8&2846.9&3900.6&5080.2&5991.8&7452.6&8444.4\\
		\hline
		\multirow{6}{*}{NonGCL-rl}&$L^{1}$ error&1.53E-04&3.89E-05&1.55E-05&7.93E-06&4.47E-06&2.85E-06&1.93E-06\\
		&Order&---&3.36&3.20&3.00&3.14&2.92&2.91\\
		&$L^{\infty}$ error&1.05E-03&5.79E-04&1.90E-04&7.94E-05&4.35E-05&1.91E-05&2.10E-05\\
		&Order&---&1.47&3.86&3.92&3.30&5.33&-0.72\\
		&$\overline{N_\ell}$&4.7&7.0&9.4&12.0&14.4&17.1&19.7\\
		&$L_{\mathbf{w}}^{\max}$&10.0&10.0&10.0&10.0&10.0&10.0&10.0\\
		\toprule[1.0pt]
	\end{tabular}
	\endgroup
\end{table}

\begin{table}[!t]
	\centering
	\captionsetup{font=small}
	\caption{Random-mesh test \ref{subsec:euler_random_sine} on a $320\times320$ mesh. Total CPU time, remapping time, remapping fraction, and average number of pseudo-time levels $\overline{N_\ell}$ at $T=0.1$.}
	
	\begingroup
	\setlength{\tabcolsep}{2.6666pt} 
	\renewcommand{\arraystretch}{1.2} 
	\centering
	\footnotesize
	\label{tab:t2-2}
	\begin{tabular}{cccccc} 
		\bottomrule[1.0pt]
		& & TPE(2)-ol & GCL-ol & NonGCL-ol & NonGCL-rl\\
		\hline
		&Total CPU time (s)&5795.1&6211.8&16490.4&67168.3\\
		&Remapping time (s)&3942.6&4365.8&10975.8&64781.5\\
		&Remapping fraction&68.0\%&70.3\%&66.6\%&96.4\%\\
		&$\overline{N_\ell}$&2.0&2.0&2.0&19.7\\
		\toprule[1.0pt]
	\end{tabular}
	\endgroup
\end{table}

\subsection{Assessment of ALE rezoning efficacy}\label{subsec:ale_rezoning_tests}
To verify that the incorporation of EGMs does not compromise the resolution gains of mesh adaptation, we evaluate the TPE(2)-ol scheme using quasi-1D Euler benchmarks. Specifically, we compare the scheme with ALE rezoning against a fixed-mesh baseline (setting $\mathbf w=\mathbf 0$ in the TPE(2)-ol scheme, denoted as the \textit{Fixed} scheme).

\subsubsection{Woodward--Colella blast wave}\label{subsubsec:blast_wave}
The interacting blast wave problem is employed to assess the schemes' robustness and their ability to resolve complex patterns of interacting strong shocks, rarefactions, and contact discontinuities. The initial condition is 
\[
(\rho, v_x, v_y, p, \gamma)=
\begin{cases}
	(1, 0, 0, 10^{3}, 1.4), & x < 0.1,\\
	(1, 0, 0, 10^{-2}, 1.4), & 0.1 \le x < 0.9,\\
	(1, 0, 0, 10^{2}, 1.4), & x \ge 0.9.
\end{cases}
\]
Computations are performed on a $400 \times 10$ mesh over $[0,1] \times [-5h_x, 5h_x]$ with reflective boundaries at $x=0$ and $x=1$. 
Figure~\ref{fig:blast} displays the density profile along $y=0$ at $T=0.038$. Both schemes successfully capture the strong discontinuities without generating spurious oscillations. However, the zoom-in views reveal that the TPE(2)-ol scheme, by virtue of the ALE-rezoning strategy, achieves a significantly higher resolution of intricate structures compared to the Fixed scheme.

\begin{figure}[!htb]
	\centering
	\subfloat[Overview.]{\includegraphics[height=0.24\textheight]{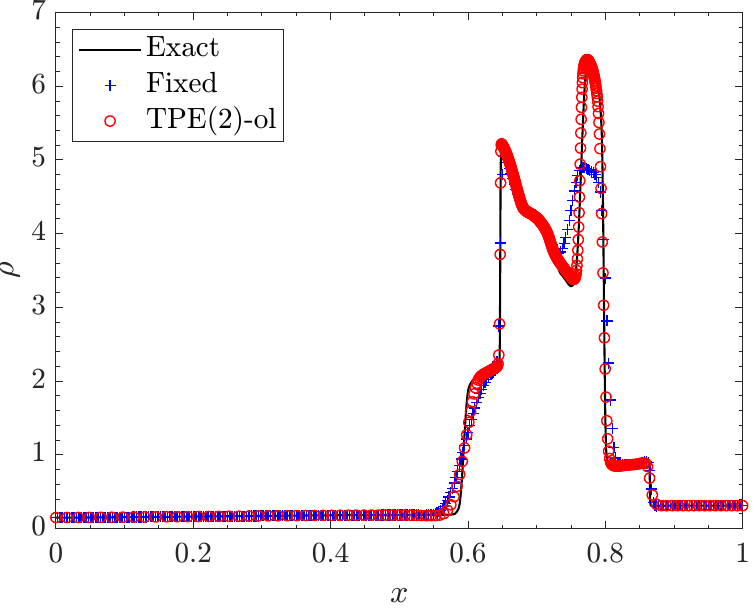}}
	\subfloat[Zoom-in.]{\includegraphics[height=0.24\textheight]{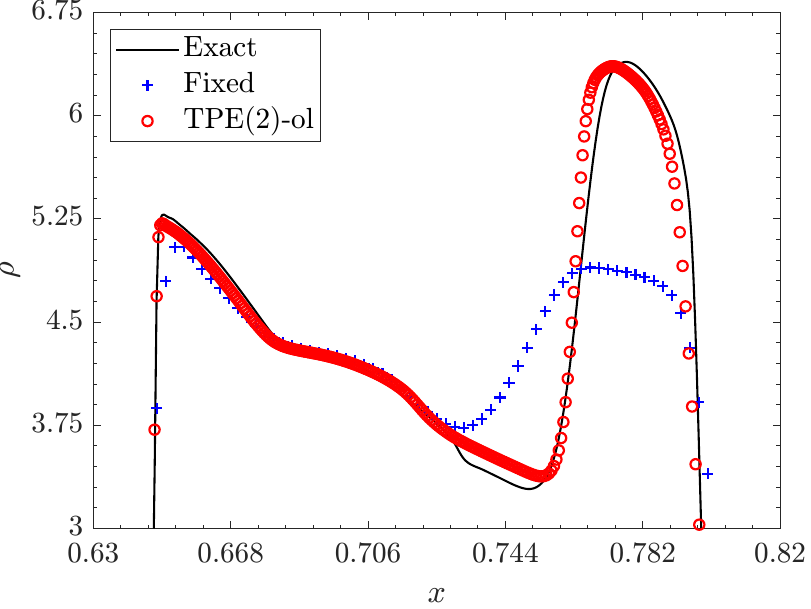}}
	\captionsetup{font=small}
	\caption{Woodward--Colella blast wave problem \ref{subsubsec:blast_wave}: density profile along $y=0$ computed by the TPE(2)-ol and Fixed schemes. $N_x\times N_y=400\times 10$, $T=0.038$.}
	\label{fig:blast}
\end{figure}

\subsubsection{Shu--Osher problem}\label{subsubsec:shu_osher}
We next simulate the classical Shu--Osher problem. The initial condition is given by
\[
(\rho, v_x, v_y, p, \gamma)=
\begin{cases}
	(3.857143,\ 2.629369,\ 0,\ 10.33333,\ 1.4), & x<-4,\\
	(1+0.2\sin(5x),\ 0,\ 0,\ 1,\ 1.4), & x\ge -4.
\end{cases}
\]
Computations are performed on a $400 \times 10$ grid.
The Fixed scheme is solved on $[-5,5]\times[-5h_x,5h_x]$.
In contrast, to accommodate the Lagrangian mesh compression inherent to the ALE strategy, the TPE(2)-ol computation is initialized on an extended domain $[-9.5,5]\times[-5h_x,5h_x]$.
Consequently, the ALE simulation begins with a \textit{coarser} initial grid spacing than the fixed-mesh baseline.
Despite this, Figure~\ref{fig:shu} reveals that the ALE rezoning strategy achieves substantially better resolution of the post-shock fine-scale structures, confirming the benefit of grid adaptation.

\begin{figure}[!htb]
	\centering
	\subfloat[Overview.]{\includegraphics[height=0.24\textheight]{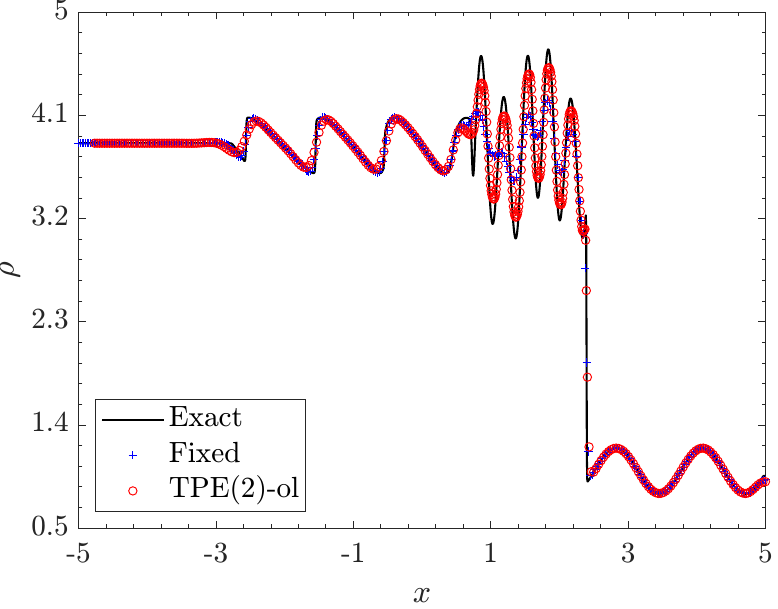}}
	\subfloat[Zoom-in.]{\includegraphics[height=0.24\textheight]{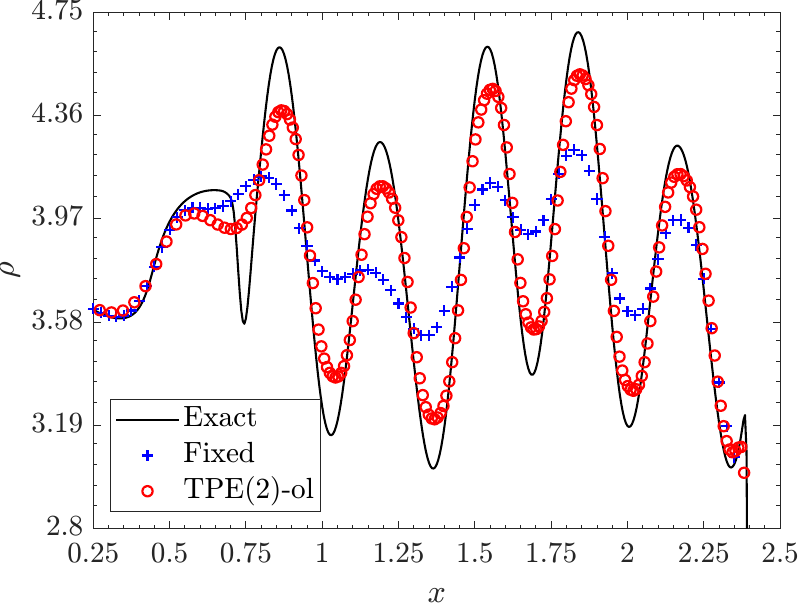}}
	\captionsetup{font=small}
	\caption{Shu--Osher problem \ref{subsubsec:shu_osher}: density profile along $y=0$ computed by the TPE(2)-ol and Fixed schemes. $N_x\times N_y=400\times 10$, $T=1.8$.}
	\label{fig:shu}
\end{figure}

\subsection{Discontinuous problems on large-deformation random meshes}
\label{subsec:discont_random}
This section assesses the stability of the TPE(2)-ol scheme for discontinuous Euler-equation benchmarks under random rezoning. We set the rezoning parameters to \(C_r=0.5\) and \(\mathbf{b}=\mathbf{0}\), which, as illustrated in Figure~\ref{fig:distor}, induces extreme mesh distortion and skewness. This provides a stringent stress test for the schemes' robustness against large, rapid deformations.

\subsubsection{Sensitivity to pseudo-time levels $N_\ell$ in a 2D Riemann problem}\label{subsubsec:2D_Riemann}
We employ a 2D Riemann problem to assess the sensitivity of the TPE(2) and NonGCL schemes to the number of remapping pseudo-time levels $N_\ell$. 
The initial condition, which involves two stationary contact discontinuities and two shocks, is given by
\[
(\rho, v_x, v_y, p, \gamma)=
\begin{cases}
	(0.8,\ 0,\ 0,\ 1,\ 1.4), & x<0.5,\ y<0.5,\\
	(1,\ 0.7276,\ 0,\ 1,\ 1.4), & x<0.5,\ y\ge 0.5,\\
	(1,\ 0,\ 0.7276,\ 1,\ 1.4), & x\ge 0.5,\ y<0.5,\\
	(0.5313,\ 0,\ 0,\ 0.4,\ 1.4), & x\ge 0.5,\ y\ge 0.5.
\end{cases}
\]
The computational domain is $\Omega=[0,1]^2$, with outflow boundary conditions prescribed on all boundaries.

Figure~\ref{fig:2DRiemann1} compares the density contours computed with varying $N_\ell$.
The NonGCL scheme exhibits a strong dependence on $N_\ell$: significant distortion and loss of detail are evident at $N_\ell=2$, which are mitigated only as $N_\ell$ is increased to 6.
In sharp contrast, the TPE(2) scheme demonstrates superior robustness. Its solution at $N_\ell=2$ is already comparable to the NonGCL result at $N_\ell=6$, with negligible variation for higher $N_\ell$.
This confirms the efficiency advantage of the TPE(2) scheme: accurate remapping is achieved with minimal pseudo-time levels, substantially reducing the computational cost.

\begin{figure}[!htb]
	\centering
	\subfloat[TPE(2) (\(N_\ell=2\)).]{\includegraphics[height=0.19\textheight]{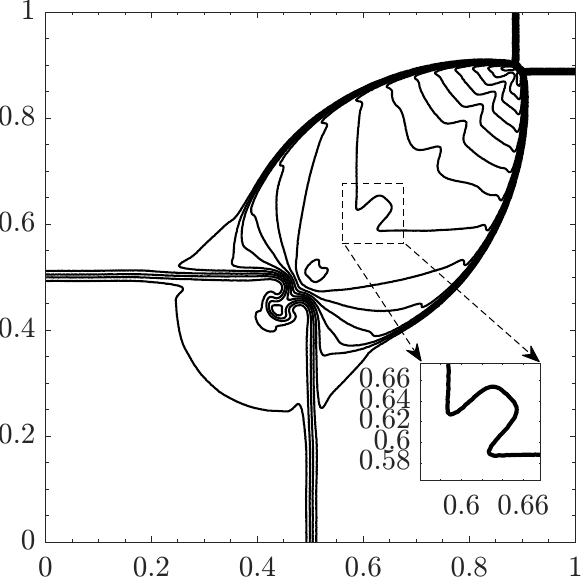}}
	\subfloat[TPE(2) (\(N_\ell=3\)).]{\includegraphics[height=0.19\textheight]{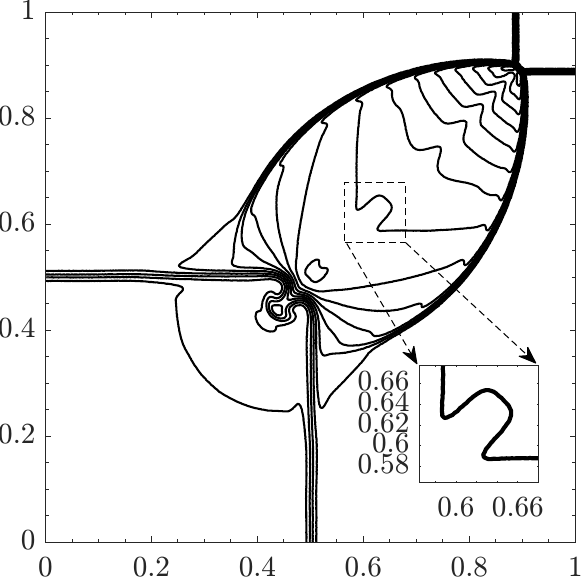}}
	\subfloat[TPE(2) (\(N_\ell=6\)).]{\includegraphics[height=0.19\textheight]{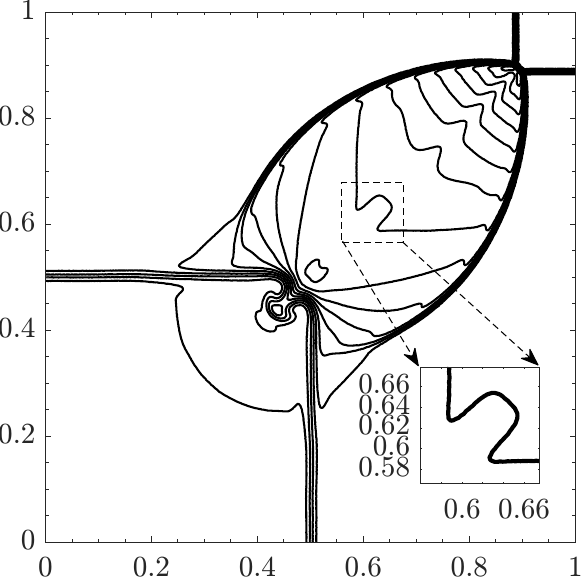}}
	
	\subfloat[NonGCL (\(N_\ell=2\)).]{\includegraphics[height=0.19\textheight]{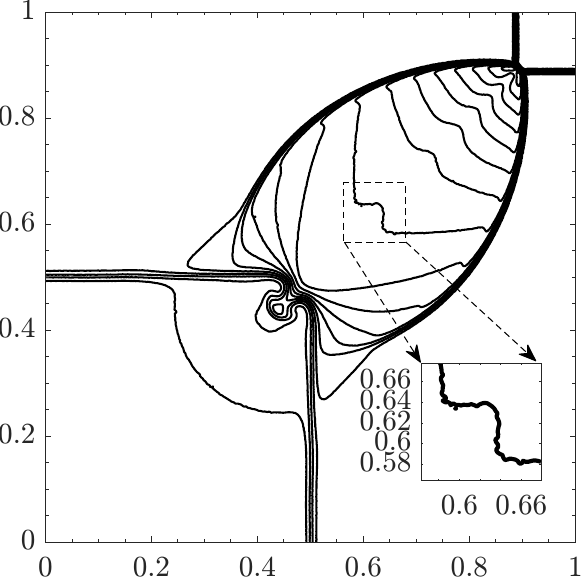}}
	\subfloat[NonGCL (\(N_\ell=3\)).]{\includegraphics[height=0.19\textheight]{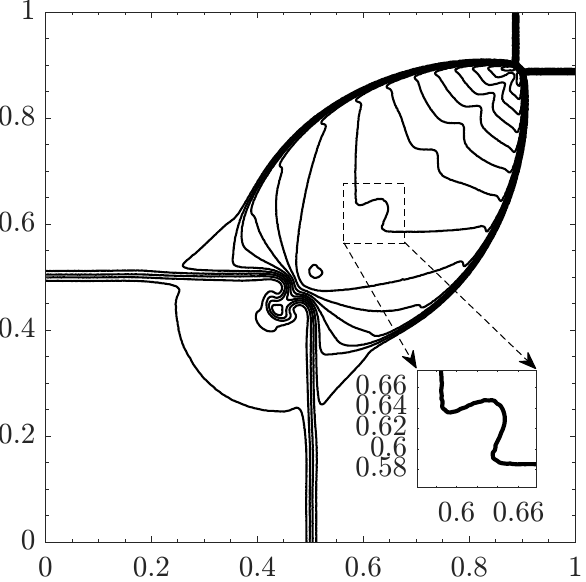}}
	\subfloat[NonGCL (\(N_\ell=6\)).]{\includegraphics[height=0.19\textheight]{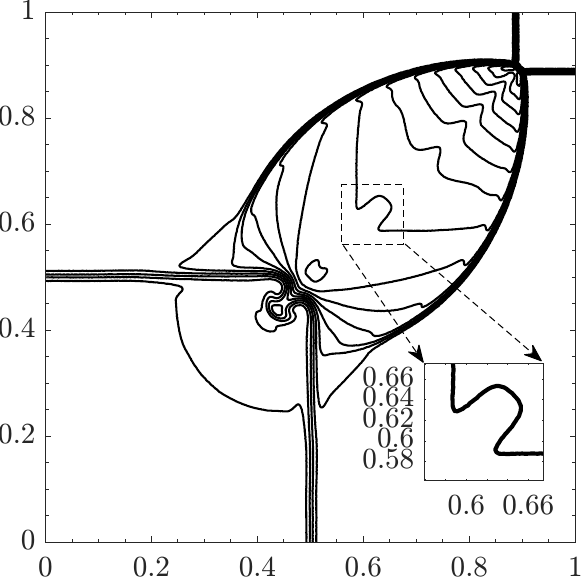}}
	
	\captionsetup{font=small}
	\caption{2D Riemann problem \ref{subsubsec:2D_Riemann} under extreme random rezoning ($C_r=0.5$, $\mathbf{b}=\mathbf{0}$):
	density contours computed by the TPE(2) and NonGCL schemes with different numbers of pseudo-time levels \(N_\ell\).
	The mesh is randomly perturbed at every time step to induce severe distortion. 
	Each panel uses 23 contour levels.
	\(N_x\times N_y=400\times 400\), \(T=0.25\).}
	\label{fig:2DRiemann1}
\end{figure}

\subsubsection{Double Mach reflection under extreme mesh distortion}\label{subsubsec:double_mach}
Finally, we consider the double Mach reflection problem, a standard benchmark for evaluating a scheme's ability to resolve fine-scale structures involving strong shock interactions. 
A Mach 10 shock impinges on a reflecting wall at a \(60^\circ\) angle in the domain \(\Omega=[0,4]\times[0,1]\). With the standard setup \cite{qiu2005runge,peng2025oedg}, the solution is advanced to $T=0.2$. Figure~\ref{fig:doublemach} displays the density contours, showing excellent agreement with the numerical results reported in \cite{qiu2005runge,peng2025oedg}.
Despite the severe mesh deformation, the TPE(2)-ol scheme resolves intricate flow features without mesh-induced artifacts or spurious oscillations, providing strong evidence of its robustness.

\begin{figure}[!htb]
	\centering
	\subfloat{\includegraphics[height=0.15\textheight]{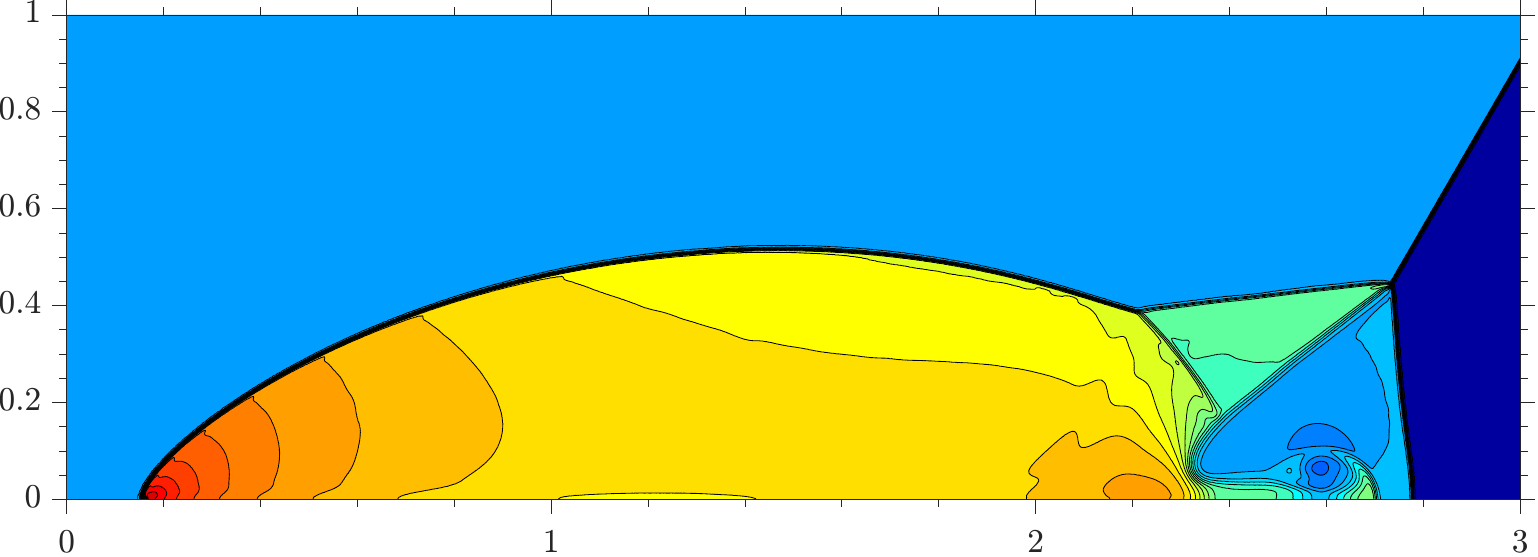}}
	\subfloat{\includegraphics[height=0.15\textheight]{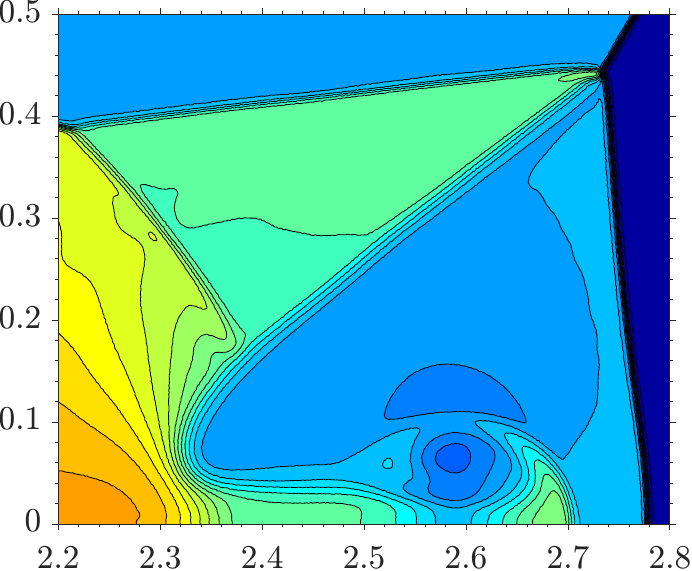}}
	\captionsetup{font=small}
	\caption{Double Mach reflection problem \ref{subsubsec:double_mach} under extreme random rezoning ($C_r=0.5$, $\mathbf{b}=\mathbf{0}$):
		density contours computed by the TPE(2)-ol scheme.
		The mesh undergoes large-scale random perturbations at every time step.
		\(N_x\times N_y=1920\times 480\), \(T=0.2\).}
	\label{fig:doublemach}
\end{figure}
\section{Conclusions}\label{sec:conclusion}
This work addresses the challenge of maintaining formal high-order accuracy in moving-mesh methods under nonsmooth mesh motion. By establishing the \emph{transport polynomial exactness} (TPE($k$)) property, we provide a rigorous criterion that generalizes classical free-stream preservation. This criterion shifts the focus from simple volume consistency to high-order fidelity under arbitrary mesh motion. To realize the TPE($k$) property, we propose the method of \emph{evolved geometric moments} (EGMs). By solving auxiliary transport equations discretized compatibly with the conserved variables, this approach \emph{extends the classical geometric conservation law} (GCL) to higher orders and can be integrated into arbitrary Lagrangian--Eulerian (ALE) frameworks. Analytically, we prove that the alignment between second-degree EGMs and exact geometric moments is intrinsic to the \emph{superconvergence behavior} of the third-order strong stability preserving Runge--Kutta method. This mathematical synergy guarantees geometric consistency without resorting to nonconservative post-processing. Leveraging this theoretical foundation, we construct a conservative two-dimensional finite-volume TPE(2) rezoning moving-mesh (RMM) scheme. By incorporating EGMs with a 2-exact hybrid WENO reconstruction, the scheme ensures that both the remapping operator and the physical evolution operator satisfy the TPE(2) property. Crucially, this property holds \emph{independently of mesh-velocity regularity} and the \emph{physical and pseudo-time step sizes}.  The decoupling of accuracy from mesh-velocity regularity allows the solver to operate with $\mathcal{O}(1)$ pseudo-time levels when handling bounded yet discontinuous mesh velocity, breaking the efficiency bottleneck of conventional advection-based remapping. In the random-mesh Euler test, the number of pseudo-time levels $N_\ell$ remains $2$ up to a $320\times 320$ mesh, whereas the resolution-dependent strategy following \cite{gu2023high} reaches $N_\ell\approx 20$ and results in roughly a tenfold increase in remapping time. Complementing this performance leap, the EGM method streamlines implementation by avoiding repeated reference-to-physical mappings and Jacobian evaluations. 

Fundamentally, our results reveal a critical insight: while the GCL ensures free-stream preservation (TPE(0)), it is \emph{insufficient} to prevent order degradation in high-order methods under extreme conditions, validating the necessity of the TPE($k$) framework. This work challenges the prevailing assumption that high-order accuracy in ALE methods necessitates mesh-velocity smoothness. By embedding geometric information directly into the conservation laws via EGMs, we demonstrate that robustness can be maintained even under discontinuous mesh velocity, paving the way for next-generation high-order algorithms robust to extreme mesh motion.


\appendix
\bibliographystyle{siamplain}
\bibliography{ref}
\newpage 
\end{document}